\numberwithin{equation}{subsection}
\numberwithin{subsection}{section}
\newtheorem*{namedtheorem}{\theoremname}
\newcommand{\theoremname}{testing}
\theoremstyle{plain}
\newtheorem{thm}{Theorem}[section]
\newtheorem{proposition}[thm]{Proposition}
\newtheorem{proposition-definition}[thm]{Proposition-Definition}
\newtheorem{lemma-definition}[thm]{Lemma-Definition}
\newtheorem{corollary}[thm]{Corollary}
\newtheorem{lemma}[thm]{Lemma}
\theoremstyle{definition}
\newtheorem{definition}[thm]{Definition}
\newtheorem{notation}[thm]{Notation}
\newtheorem{assumption}[thm]{Assumption}
\newtheorem{remark}[thm]{Remark}
\newtheorem{construction}[thm]{Construction}
\theoremstyle{remark}
\numberwithin{thm}{section}
\newcommand\ocM{\overline{\mathcal{M}}}
\newcommand\uGamma{\underline{\Gamma}}
\newcommand\cA{\mathcal{A}}
\newcommand\cC{\mathcal{C}}
\newcommand\cF{\mathcal{F}}
\newcommand\cG{\mathcal{G}}
\newcommand\cM{\mathcal{M}}
\newcommand\cO{\mathcal{O}}
\newcommand\cT{\mathcal{T}}
\def\O{\mathcal{O}}
\def\P{\mathbb{P}}
\def\A{\mathbb{A}}
\def\C{\mathbb{C}}
\newcommand\uC{\underline{C}}
\newcommand\uf{\underline{f}}
\newcommand\uS{\underline{S}}
\newcommand\uX{\underline{X}}
\newcommand\uY{\underline{Y}}
\newcommand\uZ{\underline{Z}}
\renewcommand\AA{\mathbb{A}}
\newcommand\CC{\mathbb{C}}
\newcommand\GG{\mathbb{G}}
\newcommand\NN{\mathbb{N}}
\newcommand\PP{\mathbb{P}}
\newcommand\QQ{\mathbb{Q}}
\newcommand\XX{\mathbb{X}}
\newcommand\ZZ{\mathbb{Z}}
\newcommand\fM{\mathfrak{M}}
\newcommand\fS{\mathfrak{S}}
\newcommand\fX{\mathfrak{X}}
\newcommand\tF{\tilde{F}}
\newcommand\arr{\ifinner\to\else\longrightarrow\fi}
\newcommand{\Gm}{\GG_m}
\def\displaytimes_#1{\mathrel{\mathop{\times}\limits_{#1}}}
\def\displayotimes_#1{\mathrel{\mathop{\bigotimes}\limits_{#1}}}
\newcommand\spec{\operatorname{Spec}}
\newcommand\sym{\operatorname{Sym}}
\newcommand\doublelong[2]{\mathbin{\xymatrix{{}\ar@<3pt>[r]^{#1}
\ar@<-3pt>[r]_{#2}&}}}
\newlength{\ignora}
\renewcommand{\setminus}{\smallsetminus}
\newcommand{\Z}{\mathbb{Z}}
\numberwithin{equation}{subsection}
\newcommand{\X}{X} %log variety associated to the wonderful compactification
\newcommand{\Y}{Y} % center of X
\newcommand{\ulY}{\underline{\Y}}
\newcommand{\ulX}{\underline{\X}}
\begin{document}

\title[Irreducibility and unirationality]{On the irreducibility of the space of genus zero stable log maps to wonderful compactifications}

\author{Qile Chen}

\author{Yi Zhu}

\address[Chen]{Department of Mathematics\\
Columbia University\\
Rm 628, MC 4421\\
2990 Broadway\\
New York, NY 10027\\
U.S.A.}
\email{q\_chen@math.columbia.edu}

\address[Zhu]{Department of Mathematics\\
University of Utah\\
Room 233\\
155 S 1400 E \\
Salt Lake City, UT 84112\\
U.S.A.}
\email{yzhu@math.utah.edu}

\thanks{Chen is partially supported by the Simons foundation and NSF grant DMS-1403271.}

\date{\today}

\subjclass[2010]{Primary 	14D23, 14M20, Secondary 14M27}
\keywords{stable log map, moduli space, wonderfule compactification, sober spherical variety}

\begin{abstract}
In this paper, we prove the moduli spaces of genus zero stable log maps to {a large class of} wonderful compactifications of sober spherical homogeneous spaces are irreducible and unirational.
\end{abstract}
\maketitle

\tableofcontents

%%%%%%%%%%%%%%%%%%%%%%%%%%%%%%%%%%%%%%%%%%%%%%%%Introduction
\section{Introduction}\label{sec:intro}

Throughout this paper, we work over an algebraically closed field of characteristic zero, denoted by $\CC$.

\subsection{Stable log maps}

Let $X$ be a fine and saturated log scheme. A {\em stable log map} to $X$ over a log scheme $S$ is given by the following data:
\[
(C \to S, f: C \to X, p_1, \cdots, p_n)
\]
where
\begin{enumerate}
 \item $C \to S$ is a family of log curve with markings $p_1, \cdots, p_n$, see \cite{FKato,LogCurve};
 \item $f: C\to X$ is a morphism of log schemes;
 \item the underlying map $\uf: \uC \to \uX$ obtained by removing log structures of $f$ is stable in the usual sense. 
\end{enumerate}
The {\em arrows} between stable log maps are defined by cartesian diagram in the category of log schemes. We thus obtain $\fM_{\Gamma}(X)$ the category of stable log maps with discrete data $\Gamma$ fibered over the category of log schemes. Here $\Gamma = (g, n, \beta, \{c_i\}_{i=1}^{n})$ consists of the genus $g$ of the source curve, the curve class $\beta$ of the underlying stable map, the number $n$ of marked points, and the contact orders $c_i$ for the $i$-th marked point. The contact order describes the tangency condition of the stable log map $f$ with the boundary of $X$ along the marked point. 

The theory of stable log maps has been established in a sequence of papers \cite{GS, Chen, AC, log-bound, wise}. The fibered category $\fM_{\Gamma}(X)$ is shown to be represented by a Deligne-Mumford stack with the natural fine and saturated log structure, and is proper when the underlying scheme $X$ is projective. 

\subsection{Main result}

The goal of the current paper is to prove the irreducibility and unirationality of the space of stable log maps of genus zero for wonderful compactifications \cite{Knop,Luna}.

%The goal of the current paper is to provide the following interesting properties in the log homogeneous situation:

\begin{thm}\label{thm:main}
Let $G$ be a semi-simple linear algebraic group over $\CC$, and $H \subset G$ be a sober spherical subgroup. Let $\X$ be {the} log smooth variety associated to the wonderful compactification of $G/H$. Assume that all colors of the wonderful compactification are of type (b). For any discrete data $\Gamma = (g=0,n,\beta,\{c_i\}_{i=1}^n)$, the coarse moduli space
 associated to the stack $\fM_{\Gamma}(\X)$ of stable log maps, {if it is non-empty,} is irreducible and unirational. %\Yi{if empty, then the statement is always true. no need to add this}
% associated to the underlying stack of $\fM_{\Gamma}(\X)$ is irreducible and unirational.
\end{thm}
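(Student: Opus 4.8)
The plan is to reduce the irreducibility and unirationality of $\fM_{\Gamma}(\X)$ to a statement about chains of rational curves degenerating into the boundary, following the strategy of A-graph/decomposition arguments for log stable maps together with the rich geometry of wonderful compactifications. First I would recall the structure of the wonderful compactification $\bar{X}$ of $G/H$: it carries a $G$-action with finitely many orbits, a unique closed orbit, and the boundary is a normal crossings divisor $D = D_1 \cup \cdots \cup D_r$ whose components are indexed by the simple roots not in the parabolic attached to $H$; each $D_i$ is itself (after a suitable blow-down / or as a $G$-variety) fibered over a flag variety with wonderful-compactification fibers of smaller rank. The hypothesis that all colors are of type (b) should be used to guarantee that the colors (the $B$-stable but not $G$-stable prime divisors) do not interfere with the boundary combinatorics — concretely, that the $G$-orbit closures meeting a color behave like a product, so that the relevant evaluation/intersection loci with the boundary strata are themselves unirational and irreducible. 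I would isolate this as a geometric input lemma about $\bar X$ and its boundary strata.

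Next I would set up the degeneration/induction. The coarse moduli space $\fM_{\Gamma}(\X)$ admits a stratification by the dual graph of the source curve and by how the components distribute among the $G$-orbits of $\bar X$; the open stratum parametrizes irreducible rational curves meeting the boundary with the prescribed contact orders $c_i$. The key reduction step is: (1) show the locus of maps from an irreducible $\PP^1$ with the prescribed tangencies is irreducible and unirational, and (2) show every boundary stratum of $\fM_{\Gamma}(\X)$ lies in the closure of a stratum parametrizing maps with fewer components, so that by descending induction on the number of components the whole space is irreducible and is dominated by the unirational open stratum. For step (1) I would use that $G/H$ is a spherical homogeneous space, hence $\bar X$ is rationally connected and in fact "very rationally connected" relative to the boundary: $G$ acts with a dense orbit, so one produces maps $\PP^1 \to \bar X$ with prescribed tangency to $D$ by composing a standard family of lines/conics in a big affine chart with translates by $G$, giving a unirational (indeed rational) parameter space; the contact orders are cut out by incidence conditions that remain unirational because of the product structure coming from the type-(b) color hypothesis and the inductive description of the $D_i$.

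The gluing/smoothing step — showing every boundary stratum is in the closure of the open one — is where the log structure does real work, and I expect it to be the main obstacle. One must verify that a stable log map whose source is a chain/tree of $\PP^1$'s, with some components mapping into deeper boundary strata $D_I = \bigcap_{i\in I} D_i$, can be log-smoothed to a map from an irreducible curve with the same discrete data $\Gamma$. The obstruction to smoothing a node is governed by the tangency (contact order) at that node matching on the two branches, and by the normal bundle of the boundary stratum; here I would invoke the inductive structure — $D_I$ is again (essentially) a wonderful compactification of a spherical homogeneous space of smaller rank for a Levi subgroup — to run the induction, and I would use properness of $\fM_{\Gamma}(\X)$ (from the cited log GW foundational papers) to ensure the closures of strata account for every point. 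The delicate bookkeeping is in matching contact orders along nodes and ensuring no "phantom" components are forced; the type-(b) assumption on colors is precisely what rules out the pathological degenerations into color-type boundary behavior that would break irreducibility.

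Assembling these: by the induction on rank (base case: $\bar X$ a point or $\PP^1$, trivial) and the descending induction on the number of source components within a fixed rank, every irreducible component of $\fM_{\Gamma}(\X)$ meets the open stratum, which is irreducible and unirational; hence $\fM_{\Gamma}(\X)$ is irreducible, and since it is dominated by a unirational variety it is unirational. The steps I would carry out, in order, are: (i) recall and pin down the $G$-orbit/boundary structure of $\bar X$ and prove the type-(b) product lemma for boundary strata; (ii) prove unirationality and irreducibility of the open stratum of irreducible-source maps with prescribed tangencies, via $G$-translation of an explicit family; (iii) prove the degeneration/smoothing statement that puts every boundary stratum in the closure of a lower-complexity stratum, using the inductive description of $D_I$ and properness; (iv) conclude by the two nested inductions.
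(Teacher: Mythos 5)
Your proposal diverges substantially from the paper's argument, and the divergence exposes a genuine gap at its central step. Your step (ii) asks for irreducibility and unirationality of the open stratum of maps from an irreducible $\PP^1$ with the prescribed contact orders, to be obtained by ``$G$-translating a standard family of lines/conics in a big affine chart.'' But translation by $G$ preserves contact orders and merely moves a given map around; it gives no reason why \emph{all} maps with discrete data $\Gamma$ lie in a single irreducible family, nor why the tangency conditions cut out an irreducible (let alone unirational) locus in the space of all maps of class $\beta$. That irreducibility \emph{is} the theorem: even in the rank-zero case (projective homogeneous spaces, no tangency conditions) this is the nontrivial content of Kim--Pandharipande and Thomsen, and with nontrivial contact orders it is strictly harder. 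The paper does not attack the open stratum directly: it fixes a general one-parameter subgroup $\lambda$, uses Kleiman--Bertini to put a general map in good position (Lemma \ref{lem:group-move}), and flows it under $\lambda$ to a single maximally degenerate comb $\tF$ whose handle contracts to $x^-$ and whose teeth cover the $B^-$-invariant curves $P_i$ (Lemma \ref{lem:adjoint-type-limit}); the relevant fixed locus is then explicitly $M_{0,n+\sum_i k_i}/\fS$, which is connected and rational by Bogomolov--Katsylo, and connectedness of the whole space follows because $\fM_{\Gamma}(\X)$ is log smooth. Unirationality then comes from the Bialynicki--Birula decomposition (after an equivariant log resolution, the big cell is an affine fibration over this fixed locus), not from a dominating family on the open stratum.

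Two further points. First, your use of the type (b) hypothesis (``a product structure on boundary strata ruling out pathological degenerations'') is not what the condition does: type (b) is exactly the condition that the attracting fixed point $x_i^-$ of each color lies in the closed orbit $\uY$ (Proposition \ref{prop:color}), so that the limiting curves $P_i$ lie in $\uY$ and the maximally degenerate underlying map admits a \emph{unique} stable log lift (Proposition \ref{prop:unique-log-lift}); without this, the identification of the fixed locus with $M_{0,n+\sum_i k_i}/\fS$ breaks down. Second, your smoothing step (iii) is largely redundant once log smoothness of $\fM_{\Gamma}(\X)$ is known---the trivial-log-structure locus is automatically dense---but it also quietly skips the real log-theoretic difficulty the paper flags: not every underlying stable map lifts to a stable log map, and controlling the lifts over the degenerate configuration (existence and uniqueness, Lemma \ref{lem:exist-log-lift} and Proposition \ref{prop:unique-log-lift}) is where the substantive work lies.
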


%Here the colors are the $B$-stable divisors on $G/H$. According to Luna's study of wonderful compactifications \cite[Section 1.4]{Luna}, each color has a type: (a), (a') or (b), {\red see Section \ref{sec:b} and Definition \ref{def:b} for more details}. The irreducibility is proved in Proposition \ref{prop:irreducibility}, and the unirationality follows from Proposition \ref{prop:quotient-of-rational}. 

The irreducibility is proved in Proposition \ref{prop:irreducibility}, and the unirationality follows from Proposition \ref{prop:quotient-of-rational}. 

%\Qile{I rearranged the following remarks according to the irrelevance.}

\begin{remark} %\Qile{Two remarks combined!}
In the above theorem, the colors are the $B$-stable divisors of $G/H$. According to Luna's study of wonderful compactifications \cite[Section 1.4]{Luna}, each color has a type: (a), (a') or (b), {see Section \ref{sec:b} and Definition \ref{def:b} for more details}. 

The class of wonderful compactifications with all colors of type (b) contains large number of interesting examples, including
\begin{itemize}
\item wonderful compactifications of semisimple groups;
\item spherical varieties of minimal rank \cite{Brion-evb,min-rank};
\item cuspidal spherical varieties of rank one with two exceptions: $(\P^2, \text{a smooth conic})$, and $(\P^1\times\P^1, \text{the diagonal }\P^1)$ \cite{Ahiezer,Brion-rankone}.
\end{itemize}
\end{remark}

\begin{remark} %\Yi{I do not think this remark is as important as the below two.}
Our proof indeed shows that Theorem \ref{thm:main} holds for any curve class $\beta$ satisfying Assumption \ref{assu:good-curve}. When the type (b) condition holds, Assumption \ref{assu:good-curve} is automatic.

Assumption \ref{assu:good-curve} is to enforce the underlying maps of the general limiting stable log maps factor through the center $\uY$ of the target $\uX$, see Section \ref{sec:b} for the definition. In general, the uniqueness of Proposition \ref{prop:unique-log-lift} could fail without Assumption \ref{assu:good-curve}, and a different approach to the irreducibility may be  required. We hope to investigate this situation in our future work.
\end{remark}

%\begin{remark}
%The class of wonderful compactifications with all colors of type (b) contains large number of interesting examples, including
%\begin{itemize}
%\item wonderful compactifications of semisimple groups;
%\item spherical varieties of minimal rank \cite{Brion-evb,min-rank};
%\item cuspidal spherical varieties of rank one with two exceptions: $(\P^2, \text{a smooth conic})$, and $(\P^1\times\P^1, %\text{the diagonal }\P^1)$ \cite{Ahiezer,Brion-rankone}.
%\end{itemize}
%\end{remark}

\begin{remark}
The nonemptyness of the space of stable log maps of genus zero is completely solved in our paper \cite{A1}, where we give a classification of all discrete data $\Gamma$ for such stable log maps.
\end{remark}

\begin{remark}
Wonderful compactifications of rank zero are projective homogeneous spaces with the empty boundary and the ``type (b)" condition is automatic. In this case, the stack of stable log maps $\fM_{\Gamma}(\X)$ is the same as the stack of usual stable maps to $\underline{\X}$, and is proved to be irreducible and rational by Kim-Pandharipande \cite{KP} by considering the maximal degeneration via a general torus action.  The irreducibility is also proved independently by J. Thomsen in \cite{Thomsen}. The above result may be viewed as a generalization of their results in the logarithmic setting.
\end{remark}

%{\red In \cite{A1}, the authors gave a complete classifications of genus zero log stable maps on $\X$: $\fM_{\Gamma}(\X)$ is nonempty if and only if $\beta\in \NE(X)$.\Qile{This if and only if statement is wrong! You could have the contact orders which are not compatible with the curve class, hence an empty stack.}

\begin{remark}
%In case the target $X$ is log homogeneous and $g=0$, for example, In case $X$ is given by a toric variety with its toric boundary. Then by \cite{CM} the stack $\fM_{\Gamma}(X)$ of two-pointed genus zero stable log maps to $X$ is shown to have its coarse moduli given by the Chow quotient of $X$. 

In case $X$ is given by a toric variety with its toric boundary, the stack $\fM_{\Gamma}(X)$ of two-pointed genus zero stable log maps to $X$ is shown to have its coarse moduli given by the Chow quotient of $X$, see  \cite{CM}.
\end{remark}

\subsection{Organization of the paper} In this paper, we consider a general torus action on the stack $\fM_{\Gamma}(\X)$ induced via its action on the target $\X$. In Section \ref{sec:underlying-specialization}, we study the maximal degeneration of underlying stable maps in $\ulX$ under the give torus action. This is quite similar to the situation of \cite{KP}. 

However, not every underlying stable log map can be lifted to stable log maps, since the canonical morphism $\fM_{\Gamma}(\X) \to \fM_{\underline{\Gamma}}(\ulX)$ to the stack of underlying stable maps is not surjective in general. This is a major technique difficulty of stable log maps. In Section \ref{sec:log-limit}, we show that the maximal degenerate underlying stable map in question admits a unique logarithmic lifting. Assumption \ref{assu:good-curve} is crucial for the uniqueness of the lifting.

In Section \ref{sec:rationality}, we provide the proof of  Theorem \ref{thm:main} by investigating the maximal Bialynicki-Birula cell of $\fM_{\Gamma}(\X)$ under the given torus action. Note that the stack $\fM_{\Gamma}(\X)$ is only log smooth along the fixed locus, and could have toric singularities in general. We thus pass to an equivariant resolution of the Bialynicki-Birula cell. An analysis of the log structure along the fixed locus is given for the desingularization to be unirational.

A further study of genus zero log Gromov-Witten invariants for wonderful compactifications will require a much more detailed analysis of all Bialynicki-Birula cells. We wish to carry this out in our future work.

\subsection{Notations}\label{ss:notation} %\Yi{this subsection is repeated somehow in 1.1.}\Qile{Maybe it is not bad to have a place collect all important notations.}

All log structures in this paper are assumed to be fine and saturated. We refer to \cite{KKato} for the basics of logarithmic geometry that will be used in this paper. Capital letters such as $C, S, X, Y$, and $Z$ are reserved for log schemes. The corresponding underlying schemes are denoted by $\uC, \uS, \uX, \uY$, and $\uZ$ respectively. 

Given a log scheme $X$, a stable log map to $X$ is usually denoted by $f: C/S \to X$ where $C \to S$ is the family of source log curves. When $\uS$ is a geometric point with the trivial log structure, we will simply write $f: C \to X$. {Recall the discrete data for stable log maps as we have introduced previously:}
\begin{equation}\label{equ:data-to-X}
\Gamma = (g=0, n, \beta, \{c_i\}_{i=1}^n),
\end{equation}
where $g$ denotes the genus of the source underlying curve, $n$ is a non-negative integer denotes the number of marked points, $\beta$ is a curve class in $\uX$, and $\{c_i\}_{i=1}^n$ is the collection of contact orders of the marked points along the boundary divisor of $X$. See Construction \ref{cons:monoid} for the contact orders needed in the paper. We refer to \cite[Section 4.1]{AC} and \cite{ACGS} for more details of contact orders.

Let $\fM_{\Gamma}(X)$ be the stack of $n$-marked, genus zero stable log maps with curve class $\beta \in H_2(X)$, and contact order $c_i$ along the $i$-th marking. Write $\uGamma = (g=0, n, \beta)$ by removing contact orders from $\Gamma$.

\bigskip

\subsection*{Acknowledgments}
The authors are grateful to Dan Abramovich, Jason Starr, Michael Thaddeus, and Xinwen Zhu for their useful discussions. A large part of our work has been done while the first author's visit of the Math Department in the University of Utah in February 2014. We would like to thank the Utah Math Department for its hospitality.

%%%%%%%%%%%%%%%%%%%%%%%%%%%%%%%%%torus action
\section{Specialization of underlying stable maps}\label{sec:underlying-specialization}

\subsection{Torus invariant curves and divisors on $\ulX$}\label{ss:BB}

Throughout this paper, we fix a simply connected semisimple linear algebraic group $G$ over $\CC$. Denote by $\fX^* := \fX^*(T)$  and $\fX_{*} = \fX_{*}(T)$ the character and cocharacter groups of $T$ respectively.

Consider a sober spherical subgroup $H \subset G$. This means that $G/H$ is a spherical variety and $N_G(H)/H$ is finite. Let $\ulX$ be the wonderful compactification of $G/H$. Denote by $\X$ the log smooth varieties associated to the pair $(\ulX,  \ulX\setminus G/H)$ with the natural $G$-action, see \cite[Proposition 5.4]{A1}.

We first recall the basic setup regarding the Bialynicki-Birula stratification of $\uX$ as in \cite[Section 1]{Brion-onecycle}. For the rest of this paper, we fix a regular one-dimensional torus $\lambda: \Gm \to T$ such that the set of fixed points $\ulX^{\lambda}$ equals $\ulX^{T}$. Choose a Borel subgroup $B$ containing $T$ such that $$B=G(\lambda):=\{ g\in G|\lim_{t\to 0}\lambda(t)g\lambda(t^{-1})\in G\}.$$ Let $B^-$ be the opposite Borel subgroup of $B$. 

Let $\ulY$ be the unique closed $G$-orbits of $\ulX$. Hence the set $\uX^{B^-}$ consists of a unique point lying on $\uY$, denoted by $x^-$. Denote by $\Y \subset \X$ the strict closed sub-log schemes with the underlying structure $\uY$.  

For any closed subscheme $\uZ \subset \ulX^{\lambda}$, we introduce two subsets of $\uX$:
\[
\ulX^{+}(\uZ) = \{ x \in \ulX \ | \ \lim_{t \to 0} \lambda(t)\cdot x  \in \uZ\}
\]
and 
\[
\ulX^{-}(\uZ) = \{ x \in \ulX \ | \ \lim_{t \to \infty} \lambda(t)\cdot x  \in \uZ\}.
\]

\begin{lemma}[\cite{Brion-onecycle}, Lemma 2]\label{lem:open}
We have the following:
\begin{enumerate}
%\item The $G$-orbit of $x^-$ is $\uY$. In particular, $x^-\in \uY$.
\item $\ulX^{-}(x^{-}) \subset \ulX$ is open. We write $\ulX^{-} := \ulX^{-}(x^{-})$ in short. 
\item $\uX^-$ is an open affine $B$-invariant neighborhood of $x^-$.
\item $\GG_m$ acts on the algebra of regular functions $\O_{\uX^-}$ with positive weights. Thus the $\GG_m$-action extends to a morphism:
$$\A^1\times \uX^-\to \uX^-.$$
\end{enumerate}
\end{lemma}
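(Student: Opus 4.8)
The plan is to follow Brion's argument in \cite{Brion-onecycle} closely, reducing everything to the structure theory of the affine open cell in the wonderful compactification. The statement really has two parts: the topological claim (1)--(2), and the analytic/algebraic claim (3); part (3) is what forces the other two to be useful and is where the real content lies.

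First I would establish (1) and (2). Since $\lambda$ is regular and $B = G(\lambda)$, the fixed point $x^-$ is the unique $B^-$-fixed point of $\uX$, lying on the closed orbit $\uY$. Consider the $B$-orbit $B\cdot x^-$; since $x^-$ is the ``source'' of the $\lambda$-action (in the sense that $\lim_{t\to\infty}\lambda(t)\cdot x^- = x^-$ attracts a full-dimensional set), I would identify $\uX^-(x^-)$ with the union of those Bialynicki-Birula minus-cells whose sink contains $x^-$. The key input is that for the wonderful compactification, the unique open $B$-orbit cell, call it $\uX_0$, is the affine space on which $B$ acts, and $\uX^-(x^-)$ coincides with this cell. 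Openness is then the standard fact that the minus-cell of the unique closed point in a complete variety with isolated fixed points is open (equivalently, its complement is the union of lower-dimensional plus-cells of the other fixed points, hence a proper closed subset). That $\uX^-$ is $B$-invariant follows because $B = G(\lambda)$ normalizes the attracting/repelling structure: if $\lim_{t\to\infty}\lambda(t)x = x^-$ and $b\in B$, then $\lim_{t\to\infty}\lambda(t)b x = \lim_{t\to\infty}(\lambda(t)b\lambda(t)^{-1})(\lambda(t)x) = b'\cdot x^-$ with $b' = \lim \lambda(t)b\lambda(t)^{-1} \in B^-$ actually fixing $x^-$... I would be careful here: since $B = G(\lambda)$, conjugation by $\lambda(t)$ as $t\to\infty$ sends $B$ into... the relevant point is that $B$ acts on $\uX^-$ because $\uX^-$ is the open cell stabilized by $B$; affineness of $\uX^-$ is because an open $B$-stable (indeed $B^-$-stable in the local model) subset containing the $B^-$-fixed point in a wonderful compactification is affine — this is precisely Luna--Vust/Brion's local structure theorem for the open cell.

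For part (3), the crucial point is that $\Gm$ acts on $\O_{\uX^-}$ with strictly positive weights. I would argue: $\uX^-$ is affine with coordinate ring $A = \O_{\uX^-}$, graded by the $\Gm = \lambda(\Gm)$-action, $A = \bigoplus_{n\in\ZZ} A_n$. Since every point of $\uX^-$ flows to $x^-$ as $t\to\infty$, the only $\Gm$-fixed point is $x^-$, so $A^{\Gm} = A_0 = \CC$ (the ring of invariants is the coordinate ring of the fixed locus, which is the reduced point $x^-$). Since $x^-$ is the sink (attracted as $t\to\infty$), the weights on the cotangent space $\frm_{x^-}/\frm_{x^-}^2$ are all strictly positive — this is the defining property of $\uX^-(x^-)$ being a repelling set for the $t\to 0$ flow, equivalently, near $x^-$ the linearized action has positive weights, and on an affine attracting cell the global ring is generated in positive degrees. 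Concretely, I would invoke the standard fact (e.g. Bialynicki-Birula, or the discussion in \cite{Brion-onecycle}) that an affine variety with a $\Gm$-action all of whose points flow to a single fixed point has coordinate ring non-negatively graded with degree-zero part $\CC$; then $A = \CC \oplus A_{>0}$, so $\Gm$ acts with positive weights. Once this is known, the coaction map $A \to A \otimes \CC[t, t^{-1}]$ actually lands in $A \otimes \CC[t]$ because positive weights means $t$ appears only to non-negative powers, and this is exactly a ring map $A \to A[t]$ dualizing a morphism $\AA^1 \times \uX^- \to \uX^-$ extending the $\Gm$-action.

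The main obstacle I anticipate is pinning down cleanly that $\uX^-$ is affine and that the grading of its coordinate ring is non-negative — i.e., honestly verifying we are in the ``attracting cell'' situation rather than just asserting it. This requires knowing the local structure of the wonderful compactification around $x^-$: that $\uX^-$ is $B$-equivariantly isomorphic to an affine space (or at least an affine variety) on which $\lambda$ contracts everything to the origin $x^-$. Since this is verbatim Brion's Lemma 2 of \cite{Brion-onecycle}, the cleanest route is simply to cite it after checking that our hypotheses (sober spherical $H$, regular $\lambda$ with $\uX^\lambda = \uX^T$, $B = G(\lambda)$) match his setup exactly; the only thing needing a remark is that passing to the log structure $\X$ changes nothing, as (1)--(3) are statements about the underlying scheme $\uX$ alone.
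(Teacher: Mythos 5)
Your argument is correct and matches the paper's treatment: the paper offers no proof of this lemma, simply quoting it as Lemma~2 of \cite{Brion-onecycle}, and your reconstruction (identifying $\uX^{-}(x^-)$ with the $B$-stable affine open cell via the local structure theorem, then deducing the one-sided grading of $\O_{\uX^-}$ from the fact that every point flows to $x^-$, so that invariants are constants and the coaction lands in $A[t]$) is precisely Brion's argument. Your instinct to abandon the direct limit computation $\lim_{t\to\infty}\lambda(t)b\lambda(t)^{-1}$ for $B$-invariance was sound --- that limit exists only for $b$ in $B^-=G(-\lambda)$ --- and the fallback you chose, namely that $\uX^-$ is the complement of the $B$-stable colors, is the right one.
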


%Let $Q$ be the isotropy group of $x^{-}\in Y$ and $Q^{-}$ the opposite parabolic group of $Q$ with respect to $B$. Then by the local structure theorem \cite[Theorem 2.4]{Brion-spherical}, there exists a $T$-invariant open subset $X_Y$ 

%\begin{lemma}The dense open subset $X^{-}$ coincides with the $T$-invariant open subset $X_{Y}$.\end{lemma}

\begin{lemma} \label{lem:color}
The colors of $\uX$ are the irreducible components $D_{1}, \cdots, D_l$ of the closed subset $\ulX \setminus \ulX^-$. In particular, they are $B$-stable but not $G$-stable.
\end{lemma}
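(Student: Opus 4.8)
The plan is to compare the Bialynicki--Birula decomposition of $\ulX$ with the $B$-orbit structure coming from the spherical geometry. First I would recall that, since $\lambda$ is regular and $B = G(\lambda)$, the open cell $\ulX^-$ of Lemma \ref{lem:open} is the ``big cell'' associated to the repelling fixed point $x^-$, and it is precisely the open $B^-$-orbit's closure in the attracting direction; in particular $\ulX^-$ is open, affine, and $B$-invariant, so its complement $\ulX \setminus \ulX^-$ is a closed, $B$-stable subset of pure codimension one (it is the non-vanishing locus of the $B$-semiinvariant sections cutting out the boundary of the cell). Write $D_1, \dots, D_l$ for its irreducible components; each $D_i$ is then an irreducible $B$-stable divisor of $\ulX$.

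Next I would identify these $D_i$ with the colors. Recall that the colors of $\ulX$ are by definition the closures in $\ulX$ of the $B$-stable prime divisors of the open orbit $G/H$, equivalently the $B$-stable prime divisors of $\ulX$ that are \emph{not} $G$-stable; the $G$-stable prime divisors are exactly the $\ell'$ boundary divisors $X_1, \dots, X_{\ell'}$ of the wonderful compactification, which all contain the closed orbit $\ulY$. So I must show: (i) no $D_i$ is $G$-stable, and (ii) every color appears among the $D_i$, and conversely every $D_i$ is a color. For (i): the point $x^-$ lies on $\ulY$ and $\ulY \subset \ulX^-$ would be needed for $\ulY$ to avoid all $D_i$ — indeed $x^- \in \ulX^-$, and since $\ulX^-$ is open and $B^-$-stable while $\ulY$ is a single $G$-orbit, one checks $\ulY \cap \ulX^- \neq \emptyset$; but a $G$-stable divisor contains all of $\ulY$, hence meets $\ulX^-$, contradicting that $D_i \subset \ulX \setminus \ulX^-$. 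Thus each $D_i$ is $B$-stable but not $G$-stable, i.e. a color, giving one inclusion. For the reverse inclusion, a color $D$ meets the open orbit $G/H$; its complement-of-the-cell part $D \cap (\ulX \setminus \ulX^-)$ is nonempty because $D$ is $B$-stable and irreducible while $\ulX^-$, being affine, cannot contain the projective-type divisor $D$ (equivalently: $\ulX^- \cap G/H$ is the open $B$-orbit $B x^-|_{G/H}$, whose complement in $G/H$ is the union of all colors, so every color lies in $\ulX\setminus \ulX^-$). Hence $D$ is contained in $\ulX \setminus \ulX^-$, so it is one of the $D_i$.

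Finally I would assemble: the correspondence $D_i \leftrightarrow$ color is a bijection because both sides are exactly the set of irreducible $B$-stable-but-not-$G$-stable divisors of $\ulX$ contained in $\ulX \setminus \ulX^-$, and the last sentence of the statement (``$B$-stable but not $G$-stable'') is precisely what was verified in step (i). The $B$-stability is immediate from $B$-invariance of $\ulX^-$; the failure of $G$-stability is the content of the contradiction with $x^- \in \ulY \cap \ulX^-$.

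The main obstacle I anticipate is the clean verification that $\ulX^- \cap G/H$ equals the open $B$-orbit (so that its complement in $G/H$ is exactly the union of colors) and, dually, that $\ulX^-$ contains a point of $\ulY$ — these two facts are what force $\ulX \setminus \ulX^-$ to consist of precisely the colors together with nothing from the closed orbit; everything else is bookkeeping with the definitions in \cite{Luna, Brion-onecycle}. This is where I would be most careful to cite \cite[Section 1]{Brion-onecycle} and the description of the big cell for wonderful compactifications.
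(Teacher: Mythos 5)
Your argument splits into two inclusions, and only one of them is complete. The direction ``each irreducible component $D_i$ of $\ulX\setminus\ulX^-$ is $B$-stable but not $G$-stable, hence a color'' is fine: $B$-stability follows from the $B$-invariance of $\ulX^-$, and a $G$-stable prime divisor of a wonderful compactification contains the closed orbit $\uY$, hence the point $x^-\in\ulX^-$, so it cannot lie in $\ulX\setminus\ulX^-$. This half is in fact more self-contained than the paper's treatment, which simply quotes \cite[Theorem 1]{Brion-onecycle} for the statement that $\ulX\setminus\ulX^-$ is a union of colors.

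The genuine gap is in the reverse inclusion. From the affineness (or completeness) considerations you only deduce that a color $D$ \emph{meets} $\ulX\setminus\ulX^-$; an irreducible divisor can meet a closed set without being contained in it, so this does not give $D\subset\ulX\setminus\ulX^-$. The stronger statement you then invoke in the parenthetical --- that $\ulX^-\cap G/H$ is exactly the open $B$-orbit, equivalently that every color is disjoint from $\ulX^-$ --- is essentially the content of the lemma and is left unproved; you flag it yourself as the ``main obstacle.'' The paper closes precisely this point as follows: by \cite[Proposition 2.2]{Brion-spherical}, the complement $\uX_{\uY,B}$ of the union of all colors equals $\{x\in\uX \mid \overline{Bx}\supset \uY\}$; on the other hand, for any $x\in\ulX^-$ one has $\lim_{t\to\infty}\lambda(t)\cdot x=x^-$ with $\lambda(\GG_m)\subset T\subset B$, so $x^-\in\overline{Bx}$ and hence $\overline{Bx}\supset\overline{Bx^-}=\uY$ because $Bx^-$ is dense in $\uY$. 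Thus $\ulX^-\subset\uX_{\uY,B}$, i.e.\ no color meets $\ulX^-$. You need this argument, or some equivalent input such as Brion's explicit description of the cell $\ulX^-$, to finish your reverse inclusion; without it the proof is incomplete.
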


\proof Let $\uX_{\uY,B}$ be the $B$-invariant open subset of $\uX$ by deleting all colors. By \cite[Proposition 2.2]{Brion-spherical}, we have $$\uX_{\uY,B}=\{x\in\uX\  | \ \overline{B x}\supset Y\}.$$
Thus it suffices to show that $\uX_{\uY,B}=\uX^-$. The inclusion $\uX^-\subset\uX_{\uY,B}$ is clear because the orbit $Bx^-$ is dense in $\uY$. The other inclusion follows from that the complement of $\uX^-$ is a union of colors, c.f., \cite[Theorem 1]{Brion-onecycle}.
\qed

\begin{thm}[\cite{Brion-onecycle}, Theorem 1]
The set of colors $D_1,\cdots, D_l$ are globally generated cartier divisors, which form a $\Z$-basis of the Picard group of $\ulX$. Furthermore, they generate the nef cone of $\uX$.\qed
\end{thm}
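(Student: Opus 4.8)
The plan is to prove the three assertions in turn. \emph{First, the classes $[D_i]$ form a $\Z$-basis of $\pic(\uX)$.} Since $\uX$ is smooth every Weil divisor is Cartier, so it suffices to argue with divisor classes; and by Lemma~\ref{lem:color} the divisors $D_1,\dots,D_l$ are exactly the irreducible components of the complement $\uX\setminus\uX^-$. As $\uX$ is smooth and projective (hence $\O^*(\uX)=\CC^*$), the localization exact sequence for $\pic$ along the open immersion $\uX^-\hookrightarrow\uX$ reads
\[
0 \to \O^*(\uX^-)/\CC^* \to \bigoplus_{i=1}^{l}\Z\cdot[D_i] \to \pic(\uX) \to \pic(\uX^-) \to 0,
\]
so everything reduces to the vanishings $\O^*(\uX^-)=\CC^*$ and $\pic(\uX^-)=0$. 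Both follow from Lemma~\ref{lem:open}(3): the $\Gm$-action on the affine variety $\uX^-$ extends over $\A^1$ and contracts it onto the fixed point $x^-$, so $\O(\uX^-)$ is a $\Z_{\ge 0}$-graded integral domain with degree-zero piece $\CC$ (whose units are therefore $\CC^*$), and by linearizing a line bundle with respect to $\Gm$, normalizing its fibre weight at $x^-$, and applying graded Nakayama one sees it is trivial. (In fact $\uX^-$ is $\Gm$-equivariantly isomorphic to an affine space, for which both vanishings are clear.)

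\emph{Second, $\O_{\uX}(D_i)$ is globally generated.} Because $G$ is semisimple and simply connected, every line bundle on $\uX$ carries a unique $G$-linearization, so $\H^0(\uX,\O_{\uX}(D_i))$ is a $G$-module; its canonical section $s_i$ with $\operatorname{div}(s_i)=D_i$ is a $B$-eigenvector and trivializes $\O_{\uX}(D_i)$ over $\uX\setminus D_i\supseteq\uX^-$. For each $g\in G$ the translate $g\cdot s_i$ is again a global section, with zero divisor $g\cdot D_i$, so the base locus of $|D_i|$ is contained in the $G$-stable closed set $\bigcap_{g\in G} g\cdot D_i\subseteq D_i$. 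But $x^-\in\uX^-\cap\uY$ by Lemma~\ref{lem:open}(2), hence $x^-\notin D_i$, while every nonempty $G$-stable closed subset of $\uX$ contains the unique closed orbit $\uY\ni x^-$. Thus $\bigcap_g g\cdot D_i=\varnothing$, so $|D_i|$ is base-point free, i.e. $\O_{\uX}(D_i)$ is globally generated.

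\emph{Third, the $D_i$ generate $\nef(\uX)$.} The inclusion $\operatorname{cone}(D_1,\dots,D_l)\subseteq\nef(\uX)$ is immediate, since each $D_i$ is globally generated, hence nef, and $\nef(\uX)$ is convex; together with the first step this realizes $\operatorname{cone}(D_i)$ as a full-dimensional \emph{simplicial} subcone of $\nef(\uX)$. For the reverse inclusion it is enough to produce, for every $i$, an irreducible curve $C_i\subset\uX$ with $C_i\cdot D_j=\delta_{ij}$: then any nef class, written $L=\sum_i a_i[D_i]$ in the basis above, satisfies $a_i=L\cdot C_i\ge 0$. Such curves are supplied by the Bialynicki--Birula geometry of $\uX$ — for instance as closures of suitable one-parameter-subgroup orbits, or as $B$-stable rational curves meeting $D_i$ transversally in one point and disjoint from the other colors — and checking the intersection numbers uses the explicit expansion of the $G$-stable boundary divisors of $\uX$ in terms of the $D_i$ and the spherical roots. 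I expect this last point — equivalently, that $\overline{\NE}(\uX)$ is precisely the simplicial cone dual to $\operatorname{cone}(D_i)$ — to be the main obstacle; it is the substance of \cite[Theorem~1]{Brion-onecycle}, which one would invoke directly.
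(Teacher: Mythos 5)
First, a point of comparison: the paper offers no proof of this statement at all --- it is quoted verbatim from \cite{Brion-onecycle} and closed with a \qed, so the only ``proof'' in the paper is the citation. Your sketch reconstructs the standard argument, and two of its three parts are essentially sound: the global-generation argument (unique $G$-linearization for $G$ semisimple simply connected, $G$-stable base locus forced to meet the unique closed orbit, $x^-\in\uY\setminus D_i$) is exactly Brion's, and the duality argument for the nef cone is complete \emph{once} one has irreducible curves $C_i$ with $C_i\cdot D_j=\delta_{ij}$. As written, however, that last input is obtained by ``invoking \cite[Theorem~1]{Brion-onecycle} directly,'' i.e.\ the very statement you are proving --- a circularity. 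It is repairable: the curves you need are the $P_i=\overline{B^-x_i^-}$ of Theorem~\ref{prop:curve-class} (Brion's Theorem~2), whose construction is independent of the present statement, so you should cite that instead.

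The more serious gap is the opening line ``Since $\uX$ is smooth every Weil divisor is Cartier.'' The paper does not assume $\uX$ smooth: the proof of Theorem~\ref{prop:curve-class} explicitly splits into the nonsingular case and ``the case when $\uX$ is singular,'' handled via the spherical closure. For singular $\uX$ your excision sequence along $\uX^-\hookrightarrow\uX$ computes the divisor class group, not $\pic(\uX)$; moreover the big cell $\uX^-$ is then of the form (unipotent radical)$\,\times\,$(affine toric slice) and its class group need not vanish, so the $D_i$ need not even freely generate $\operatorname{Cl}(\uX)$. In addition, the Cartier property of the $D_i$ --- which you use both to form the sequence and to speak of the line bundles $\O_{\uX}(D_i)$ in the second step --- is itself part of the assertion in the singular case. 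This is precisely where Brion's actual proof does genuine work (analysis of $B$-eigensections and of Cartier data on spherical varieties), and it is why the paper imports the result rather than rederiving it.
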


For wonderful compactifications, we strengthen a result of Brion \cite[Theorem 2]{Brion-onecycle}.

\begin{thm}\label{prop:curve-class} When $\uX$ is a wonderful compactification, we have:
\begin{enumerate}
\item For each $i=1,\cdots, l$, there exists a unique point $x_i^{-}\in D_i^{\lambda}$ such that $D_i$ is the closure of $X^{-}(x_i^-)$.
\item The $B^-$-invariant curve $P_i=\overline{B^-x_i^-}$ intersects $D_j$ at the unique point $x_i^-$, and intersects no other $D_i$. All $P_i$ are isomorphic to $\P^1$.
\item $P_i$ intersects $D_i$ transversally at $x_i^-$. Thus $(P_i.D_j)=\delta_{ij}$ and the classes $P_1,\cdots,P_l$ form a $\Z$-basis of $N_1(\uX)$.
\end{enumerate}

\end{thm}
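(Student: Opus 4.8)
The plan is to read off all three statements from the Bialynicki--Birula decomposition of $\uX$ attached to $\lambda$, together with a local analysis at the $\lambda$-fixed points lying on the colors. Since $\uX$ is smooth and projective and $\lambda$ is regular, $\uX^{\lambda}$ is finite, $\uX=\bigsqcup_{p\in\uX^{\lambda}}\ulX^{-}(p)$ with each attracting cell isomorphic to an affine space, and by Lemma~\ref{lem:open} the open cell is $\uX^{-}=\ulX^{-}(x^{-})$, whose complement is $\bigcup_{i}D_{i}$ by Lemma~\ref{lem:color}. For part~(1): $D_{i}$ is irreducible of codimension one and lies in $\uX\setminus\uX^{-}=\bigsqcup_{p\neq x^{-}}\ulX^{-}(p)$, so its generic point lies in a cell $\ulX^{-}(x_{i}^{-})$, which must then have dimension $\dim\uX-1$; hence $\overline{\ulX^{-}(x_{i}^{-})}$ is irreducible of the same dimension as $D_{i}$ and contains it, so $D_{i}=\overline{\ulX^{-}(x_{i}^{-})}$ and $x_{i}^{-}\in D_{i}^{\lambda}$. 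Uniqueness is immediate since distinct cells are disjoint and cannot both be dense in the irreducible variety $D_{i}$; the same argument shows $x_{i}^{-}\notin D_{j}$ for $j\neq i$, as otherwise $\ulX^{-}(x_{i}^{-})\subseteq\overline{\ulX^{-}(x_{j}^{-})}=D_{j}$ would force $D_{i}\subseteq D_{j}$.

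For part~(2): note $x_{i}^{-}\neq x^{-}$ (else $D_{i}=\overline{\uX^{-}}=\uX$), so $x_{i}^{-}$ is not $B^{-}$-fixed, because $\uX^{B^{-}}=\{x^{-}\}$; hence the orbit $B^{-}\cdot x_{i}^{-}=U^{-}\cdot x_{i}^{-}$ (with $U^{-}$ the unipotent radical of $B^{-}$) is a positive-dimensional orbit of a unipotent group, thus isomorphic to an affine space $\A^{m}$, $m\geq1$. Since $U^{-}$ is contracted to the identity by $\lambda^{-1}$, a short computation gives $B^{-}\cdot x_{i}^{-}\subseteq\ulX^{+}(x_{i}^{-})$, the attracting cell of $x_{i}^{-}$ for $\lambda^{-1}$, whose tangent space at $x_{i}^{-}$ is the complement of $T_{x_{i}^{-}}\ulX^{-}(x_{i}^{-})$ in $T_{x_{i}^{-}}\uX$. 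As $D_{i}=\overline{\ulX^{-}(x_{i}^{-})}$ has codimension one, $x_{i}^{-}$ is a smooth point of it, and $\lambda$ has no zero weight on $T_{x_{i}^{-}}\uX$, that complement is a line; so $\ulX^{+}(x_{i}^{-})\cong\A^{1}$, $m=1$, and $P_{i}=\overline{B^{-}\cdot x_{i}^{-}}=\overline{\ulX^{+}(x_{i}^{-})}$ is a curve. By Borel's fixed point theorem $\emptyset\neq P_{i}^{B^{-}}\subseteq\uX^{B^{-}}=\{x^{-}\}$, so $x^{-}\in P_{i}$; and $P_{i}\setminus(B^{-}\cdot x_{i}^{-})$ is a proper $B^{-}$-stable closed subset of the irreducible curve $P_{i}$, hence equals $\{x^{-}\}$. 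Thus $P_{i}=(B^{-}\cdot x_{i}^{-})\sqcup\{x^{-}\}$; since $x_{i}^{-}$ is the only $\lambda$-fixed point of $B^{-}\cdot x_{i}^{-}\cong\A^{1}$ and is the $\lambda^{-1}$-attracting one, $(B^{-}\cdot x_{i}^{-})\setminus\{x_{i}^{-}\}$ is a single $\lambda$-orbit whose remaining limit is the other fixed point $x^{-}$; in particular $P_{i}\setminus\{x_{i}^{-}\}\subseteq\uX^{-}$. That $P_{i}\cong\P^{1}$ then follows as in \cite[Theorem~2]{Brion-onecycle} (or from $P_{i}$ being a complete $B^{-}$-variety with exactly two orbits).

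For part~(3): by the above $P_{i}\setminus\{x_{i}^{-}\}\subseteq\uX^{-}=\uX\setminus\bigcup_{j}D_{j}$, while $x_{i}^{-}\in D_{i}$ and $x_{i}^{-}\notin D_{j}$ for $j\neq i$; hence $P_{i}\cap D_{j}=\emptyset$ for $j\neq i$ and $P_{i}\cap D_{i}=\{x_{i}^{-}\}$, so $(P_{i}.D_{j})=0$ for $j\neq i$. At $x_{i}^{-}$ both $P_{i}=\overline{\ulX^{+}(x_{i}^{-})}$ and $D_{i}=\overline{\ulX^{-}(x_{i}^{-})}$ are smooth, with tangent spaces $T_{x_{i}^{-}}\ulX^{+}(x_{i}^{-})$ and $T_{x_{i}^{-}}\ulX^{-}(x_{i}^{-})$, which are complementary in $T_{x_{i}^{-}}\uX$; so the intersection is transverse and $(P_{i}.D_{i})=1$, i.e.\ $(P_{i}.D_{j})=\delta_{ij}$. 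Finally, the colors form a $\Z$-basis of $\pic(\uX)$ by the cited theorem of Brion, and $(P_{i}.D_{j})=\delta_{ij}$ shows the $D_{j}$ are numerically independent; thus $\pic(\uX)=N^{1}(\uX)=\bigoplus_{j}\Z D_{j}$, and the classes $[P_{i}]$ pair with this basis as the identity matrix, so $P_{1},\dots,P_{l}$ form a $\Z$-basis of $N_{1}(\uX)$.

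The crux, I expect, is part~(2): identifying $P_{i}$ as the closure of the one-dimensional opposite cell $\ulX^{+}(x_{i}^{-})$, and then showing that it meets the boundary only at $x_{i}^{-}$ — this is precisely what the Borel fixed point argument delivers, by forcing the second $\lambda$-fixed point of $P_{i}$ to be the unique $B^{-}$-fixed point $x^{-}$, which lies in $\uX^{-}$. Once $P_{i}\setminus\{x_{i}^{-}\}\subseteq\uX^{-}$ is available, part~(1) is essentially formal, the transversality in part~(3) is the elementary observation that the two Bialynicki--Birula weight subspaces of $T_{x_{i}^{-}}\uX$ are complementary, and the $\Z$-basis statement drops out of the quoted result of Brion by duality.
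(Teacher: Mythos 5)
Your argument reconstructs, in the smooth case, essentially the content of Brion's theorem that the paper simply cites for parts (1), (2) and for the nonsingular case of (3). The genuine gap is that you assume throughout that $\uX$ is smooth (``Since $\uX$ is smooth and projective \dots each attracting cell isomorphic to an affine space''), whereas the wonderful compactification of $G/H$ for a sober spherical subgroup $H$ need not be smooth, and the entire content of the paper's own proof is the singular case of (3). Every local step of your argument at $x_i^-$ presupposes smoothness: that the Bialynicki--Birula pieces are affine spaces, that $T_{x_i^-}\uX$ splits into $\lambda$-weight spaces whose two halves are the tangent spaces of $\ulX^{+}(x_i^-)$ and $\ulX^{-}(x_i^-)$, and above all the transversality of $P_i$ and $D_i$ at $x_i^-$, which you deduce from the complementarity of those two subspaces. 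None of this is available if $x_i^-$ is a singular point of $\uX$. The paper closes exactly this gap by passing to the spherical closure $H'\supset H$: the wonderful compactification $\uX'$ of $G/H'$ is smooth by a theorem of Knop, there is a $G$-equivariant morphism $\pi:\uX\to\uX'$ under which the colors of $\uX$ are the pullbacks of those of $\uX'$, and $(P_i.D_j)=\delta_{ij}$ on $\uX$ then follows from the smooth case together with the projection formula. Your proposal contains no substitute for this reduction.

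A secondary flaw: your justification that $x_i^-\notin D_j$ for $j\neq i$ --- ``otherwise $\ulX^-(x_i^-)\subseteq\overline{\ulX^-(x_j^-)}=D_j$'' --- is a non sequitur. A closed $\lambda$-stable subset containing the limit point of a cell need not contain the cell (every point of $\A^1\subset\P^1$ limits to $0$, yet $\{0\}$ contains only $0$); colors do meet one another in general, so the assertion that the particular fixed point $x_i^-$ avoids $D_j$ requires a real argument, and your part (3) leans on it to get $(P_i.D_j)=0$ for $j\neq i$. In the smooth case the remainder of your outline (the $B^-$-orbit of $x_i^-$ is an affine line, its closure is a $\P^1$ containing $x^-$ by the Borel fixed point theorem, hence $P_i\setminus\{x_i^-\}$ lies in the open cell $\uX^-$) is the standard route and agrees with the cited proof of Brion, but as written the proposal neither repairs these two points nor reaches the singular case, which is the only part of the statement for which the paper supplies a new argument.
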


\proof Brion proves (1), (2) in \cite[Theorem 2]{Brion-onecycle}. He also proves (3) when $\uX$ is nonsingular. It remains to prove the case when $\uX$ is singular. Here the trick is to use Luna's idea on spherical closure \cite[Section 6.1]{Luna}. Let $H'\supset H$ be the spherical closure of $H$ and let $\uX'$ be the wonderful compactification of $G/H'$. By \cite[Remark 30.1]{Timashev}, there exists a $G$-equivariant morphism 
$$\pi:\uX\to\uX'$$
such that $\uX'$ is smooth \cite[Corollary 7.6]{Knop96} and the set of colors on $\uX$ is identified with those of $\uX'$ via pullback along $\pi$. % is the same as those on $\uX'$ under the pullback via $\pi$. 
Now (3) follows from its nonsingular case and the projection formula.\qed

%Let $x_{j}^{-} \in \ulX^{\lambda} \cap D_{j}$ such that $D_{j}$ is the closure of $A_{j} := \ulX^{-}(x_{j}^-)$. Denote by $P_{j} = \overline{B\cdot x_{j}^-}$. Then $P_{j}$ is a $B$-invariant rational curve in $\uY$, intersecting $D_{j}$ at $x_{j}^-$. 

%\begin{proposition}
%\begin{enumerate}
 %\item $P_{i} \cap D_{j} = \delta_{ij}$ for any $i,j$. 
 %\item The curve classes of $P_{1},\cdots, P_{l}$ generate the cone $\effcurve(\ulX)$, and form a basis of $N_{1}(\ulX)_{\ZZ}$. 
%\end{enumerate}
%\end{proposition}
%\begin{proof}
%The first statement follows the same proof as in \cite[Proposition 5.17]{A1}. See \cite[Theorem 2]{Brion-onecycle} for the second statement.
%\end{proof}

\subsection{Type (b) condition}\label{sec:b}
In general, the torus fixed points $x_{i}^{-}$ for $j = 1, \cdots, l$ do not necessarily lie in the closed orbit $\uY$. One may for example, consider the wonderful compactification corresponding to the pair $(\P^2, \text{a smooth conic})$. Next we give a sufficient and necessary criterion to rule out such situation.

%for such condition. 

We briefly recall the type of a color on wonderful compactifications \cite{Luna}. Let $\Sigma$ be the finite set of spherical roots of $\uX$, lying in the character lattice $\XX^*(T)$. See \cite[Proposition 6.4]{Luna}.

We say that a simple root $\alpha$ \emph{moves} a color $D$ if $P_\alpha D\neq D$ where $P_\alpha$ is the minimal parabolic group containing $B$ and associated to $\alpha$. Each color $D$ is moved by a unique simple root $\alpha_D$. Let $\Delta(\alpha)$ be the set of colors moved by $\alpha$.  
\begin{definition}\label{def:b}
We say that the color $D$ is 
\begin{enumerate}
\item of type (a) if $\Delta(\alpha_D)$ contains two colors;
\item of type (a') if $\Delta(\alpha_D)=\{D\}$ and $2\alpha\in \Sigma$;
\item of type (b) if $\Delta(\alpha_D)=\{D\}$ and no multiple of $\alpha$ is in $\Sigma$.
\end{enumerate}

\end{definition}
By \cite[Section 1.4]{Luna}, each color belongs to a unique type as above.

\begin{proposition}\label{prop:color}
A color $D_i$ is of type (b) if and only if the fixed point $x_i^-$ lies in the closed orbit $\uY$.
\end{proposition}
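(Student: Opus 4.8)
The plan is to analyze the local geometry near the fixed point $x_i^-$ on the $B^-$-invariant curve $P_i \cong \P^1$ and relate the $T$-weight of the tangent direction along $P_i$ to the combinatorial data (spherical roots and colors) governing the type. Since $P_i = \overline{B^- x_i^-}$ and $(P_i . D_j) = \delta_{ij}$ by Theorem \ref{prop:curve-class}, the curve $P_i$ meets the boundary only through $D_i$ (if $D_i$ is a $G$-stable boundary divisor) or stays in the open orbit otherwise. The key point is that $x_i^-$ lies in $\uY$ precisely when $P_i$ is ``oriented toward'' the closed orbit under the $\lambda$-flow, i.e.\ when $\lim_{t\to\infty}\lambda(t)\cdot x = x_i^-$ for generic $x \in P_i$ forces the orbit $B^- x_i^-$ to degenerate into $\uY$.

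First I would set up the local model: by Lemma \ref{lem:open} and Lemma \ref{lem:color}, the colors $D_1,\dots,D_l$ are the components of $\uX \setminus \uX^-$, and $\uX^-$ is the affine $B$-invariant chart around $x^-$. I would pass to the open $B^-$-chart around $x_i^-$ — equivalently, use the description of the wonderful compactification via the ``big cell'' of Luna–Vust / Brion, in which a neighborhood of the unique closed-orbit point is isomorphic to $\A^r \times (\text{a }B^-\text{-variety})$ with coordinates on the $\A^r$ factor indexed by the spherical roots $\Sigma$, and the remaining coordinates indexed by colors. In these coordinates the curve $P_i$ is a coordinate line, and the simple root $\alpha_{D_i}$ that moves $D_i$ acts on the relevant tangent line. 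The type of $D_i$ is exactly the statement about whether $\Delta(\alpha_{D_i}) = \{D_i\}$ and whether a multiple of $\alpha_{D_i}$ lies in $\Sigma$: type (a) means two colors share the moving root (so $P_i$ can be ``pushed off'' toward the open orbit via the parabolic $P_{\alpha}$), type (a$'$) means $2\alpha \in \Sigma$ (so $P_i$ flows to a fixed point on a boundary divisor but not into $\uY$), and type (b) means neither obstruction occurs, forcing $P_i$ to flow into $\uY$.

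The concrete computation I would carry out: identify $x_i^-$ with a $T$-fixed point in the big cell, compute the $T$-weights of the tangent space $T_{x_i^-}\uX$, and observe that $x_i^-$ lies in $\uY = \overline{G\cdot(\text{closed orbit})}$ iff the $B^-$-orbit $B^- x_i^-$, which sweeps out $P_i \setminus \{x_i^-\}$... wait, rather $P_i = \overline{B^- x_i^-}$, so $x_i^-$ is the \emph{attracting} end. Precisely: $x_i^- \in \uY$ iff under $P_{\alpha_{D_i}}$-action the point does not escape, which by Luna's classification (\cite[Section 1.4]{Luna}, and the structure of the parabolic $P_\alpha$ acting on $\P^1$-bundles over colors) happens exactly in type (b). I would make this rigorous by invoking the local structure theorem for spherical varieties (e.g.\ \cite[Proposition 2.2]{Brion-spherical} together with Luna's description): in a $P_{\alpha_{D_i}}^-$-stable neighborhood, the difference between types is whether $P_{\alpha_{D_i}} x_i^-$ is a point, a $\P^1$ inside a single color, or a $\P^1$ meeting the open orbit.

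The main obstacle I expect is making the ``local model'' step precise enough to read off the trichotomy without redoing Luna's classification from scratch — specifically, pinning down exactly which coordinate directions correspond to colors versus spherical roots in the big cell around $x_i^-$, and tracking how $\lambda$ interacts with the $B^-$-action (recall $B = G(\lambda)$, so $B^-$ is the \emph{repelling} parabolic for $\lambda$, which is why the $\lambda \to \infty$ limit is the natural one for $P_i$). Once the weight of the $P_i$-direction is computed, both implications follow: if $D_i$ is type (b) the weight forces $\lim_{t\to\infty}\lambda(t)x \in \uY$ for all $x \in P_i$, hence $x_i^- \in \uY$; conversely if $x_i^- \in \uY$, then $P_i$ being the unique $B^-$-curve through a $\uY$-point with $(P_i.D_i)=1$ rules out types (a) and (a$'$) by a dimension/incidence count against $\Delta(\alpha_{D_i})$ and $\Sigma$.
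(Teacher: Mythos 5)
Your plan correctly identifies the essential ingredients---the simple root $\alpha=\alpha_{D_i}$ moving the color, the minimal parabolic $P_\alpha$, and Luna's trichotomy---but the step you yourself flag as the main obstacle (``making the local model precise enough to read off the trichotomy'') is exactly where the entire proof lives, and it is not filled in. The paper's argument is a localization at $\alpha$: one passes to $\uX^{\alpha}$, the $T_\alpha$-fixed locus of $P_\alpha\uX_{\uY,B}$ (with $T_\alpha$ the connected center of the Levi of $P_\alpha$), which by \cite[Lemma 30.2]{Timashev} is itself a wonderful compactification of rank at most one for $SL_2$ or $PGL_2$, and contains $x_i^-$ as the $\lambda$-attracting point of a generic point of $D_i\cap\uX^{\alpha}$. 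The classification of such rank $\leq 1$ varieties (\cite[p.~216]{Timashev}) then reduces everything to a three-case check: in type (b), $\uX^{\alpha}\cong\P^1$ is homogeneous, so $x_i^-$ is forced into the closed orbit; in types (a) and (a$'$), $\uX^{\alpha}$ is $(\P^2,\text{a conic})$ or $(\P^1\times\P^1,\text{the diagonal})$, where the attracting point of the color visibly misses the closed orbit. Your sketch asserts the conclusion of this case check (``happens exactly in type (b)'') without performing it or citing a statement that performs it; that assertion is the proposition itself in disguise.

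Two further problems with the substitute route you outline. Computing the $T$-weights of $T_{x_i^-}\uX$ in a ``big cell'' around $x_i^-$ is close to circular: which coordinate directions of that chart correspond to colors versus spherical roots depends on which $G$-orbit $x_i^-$ lies in, and that is precisely the question. And the converse direction as you state it (``a dimension/incidence count against $\Delta(\alpha_{D_i})$ and $\Sigma$'') is not an argument: by Theorem \ref{prop:curve-class}(3) the incidence data $(P_i.D_j)=\delta_{ij}$ holds for every color regardless of its type, so it cannot separate type (b) from types (a) and (a$'$). Some version of the localization-plus-classification step is genuinely needed for both implications.
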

\proof Let $\alpha$ be the simple root moving $D_i$. Consider the localization $\uX^\alpha$ of $\uX$ at $\alpha$, that is, $\uX^\alpha$ is the $T_\alpha$-fixed points of $P_\alpha\uX_{Y,B}$, where $P_\alpha$ is the standard minimal parabolic group determined by $\alpha$, and $T_\alpha$ is the connected component of the center of the Levi subgroup $L(P_\alpha)$. By \cite[Lemma 30.2]{Timashev}, $\uX^\alpha$ is a wonderful compactification of rank less than one with the action of either $SL_2$ or $PGL_2$, and $x_i^-$ is the $\lambda$-torus fixed point where a generic point of the color $D_i\cap \uX^\alpha$ retracts to. By the classification result in \cite[p. 216]{Timashev}, if $\alpha$ is of type (b), $\uX^\alpha$ is a projective homogeneous space isomorphic to $\P^1$, which implies that $x_i^-$ lies in the closed orbit $\uY$. If $\alpha$ is not of type (b), $\uX^\alpha$ is isomorphic to the pair $(\P^2,\text{conic})$ or $(\P^1\times\P^1, \text{the diagonal})$. In these two cases, $x_i^-$ does not lie in the unique closed orbit.
\qed

\subsection{A specialization of non-degenerate underlying stable maps}\label{ss:underlying-deg}

Let $U$ be the union
\begin{equation}\label{equ:open-stratum}
\ulX^-\cup X^-({x_1^-})\cup\cdots\cup X^-({x_l^-}) .
\end{equation} with the notations as in Theorem \ref{prop:curve-class}.
By the Bialynicki-Birula stratification, $U$ is an open subscheme of $\ulX$ whose completement is of codimention at least $2$.

We consider the following situation:

\begin{notation}\label{not:good-map}
Let $\uf: \uC \to \ulX$ be a genus zero usual stable map with discrete data $\uGamma$ induced by $\Gamma$ as in (\ref{equ:data-to-X}) with the following properties:
\begin{enumerate}
\item $\uC \cong \PP^1$, and $\uf(\uC) \cap G/H \neq \emptyset$.
\item $\uf(\uC) \subset U$, with $U$ given by (\ref{equ:open-stratum}). 
\item $\uf(\uC)$ intersects the divisors $D_i$ transversally at distinct non-marked points. 
\item The images of all markings under $\uf$ lie in $\ulX^-$.
\end{enumerate} 
In particular, the stable map $\uf$ lifts to a unique stable log map $f: C \to \X$.
\end{notation}

Consider the usual stable map $\uf: \uC \to \ulX$ as in Notation \ref{not:good-map}.  We may assume that
\begin{equation}\label{equ:curve-class-ad}
\beta = \sum_{i=1}^{l}k_{i}[P_{i}],
\end{equation}
with non-negative integers $k_{i}$ for all $i$. 

Consider the general $\Gm$-action $\lambda$ as in the beginning of Section \ref{ss:BB}. Denote by $\uf_0: \uC_0 \to \ulX$ the limit of the underlying stable map $\uf$ when $t \to \infty$ under the $\Gm$-action. 

\begin{notation}\label{not:limit}
To construct such $\uf_0$, we introduce a underlying pre-stable map $\tF: \uC_{1} \to \uX$ as follows:
\begin{enumerate}
 \item $\uC_{1} = \uC \cup \bigcup_{i=1}^{l}(\cup_{j=1}^{k_i}\PP_{i,j})$, where $\PP_{i,j} \cong \PP^1$ is attached to $\uC$ at $x_{i,j}$;
 \item All marked points on $\uC_1$ is given by the markings on $\uC$;
 \item $\tF(\uC) = x^-$, and $\tF|_{\PP_{i,j}}: \PP_{i,j} \to P_{i}$ with $\tF_{*}[\PP_{i,j}] = [P_{i}]$.
\end{enumerate} 
\end{notation}

\begin{lemma}\label{lem:adjoint-type-limit}
Let $\uf$ be the stable map as in Notation \ref{not:good-map}. Then the limit underlying stable map $\uf_0$ is given by the stabilization of $\tF$.
\end{lemma}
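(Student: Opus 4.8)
The plan is to compute the limit $\uf_0 = \lim_{t\to\infty}\lambda(t)\cdot\uf$ directly and identify it with the stabilization of the pre-stable map $\tF$ of Notation~\ref{not:limit}. First I would set up the limiting family: the $\Gm$-action on $\ulX$ induces an action on the stack of usual stable maps $\fM_{\uGamma}(\ulX)$, and since this stack is proper, the orbit map $\Gm\to\fM_{\uGamma}(\ulX)$, $t\mapsto\lambda(t)\cdot[\uf]$, extends uniquely to a morphism $\A^1\to\fM_{\uGamma}(\ulX)$ sending $\infty$ (after the substitution $t\mapsto t^{-1}$) to a well-defined limit point $[\uf_0]$. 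So the content of the lemma is purely to identify $\uf_0$, and the strategy is to produce an explicit family over $\A^1$ (or a finite base change of it) whose special fiber is $\mathrm{stab}(\tF)$ and whose general fiber is $\lambda(t)\cdot\uf$, then invoke separatedness of $\fM_{\uGamma}(\ulX)$.

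The key geometric input is the decomposition of $\uf(\uC)$ dictated by Notation~\ref{not:good-map}. Since $\uf(\uC)\subset U$ and $\uf(\uC)$ meets each $D_i$ transversally at $k_i$ distinct non-marked points $x_{i,j}$ (this uses \eqref{equ:curve-class-ad}, i.e. $(\uf(\uC).D_i)=k_i$, together with transversality), the preimage $\uf^{-1}(\ulX^-)$ is the complement in $\uC\cong\PP^1$ of the finitely many points $x_{i,j}$, and each $x_{i,j}$ maps into $D_i\setminus\bigcup_{j'}D_{j'}\subset X^-(x_i^-)$ by Theorem~\ref{prop:curve-class}(2) and the definition of $U$. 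On $\uf^{-1}(\ulX^-)$ Lemma~\ref{lem:open}(3) tells us $\Gm$ acts on $\O_{\uX^-}$ with positive weights, so $\lim_{t\to\infty}\lambda(t)\cdot\uf(x)=x^-$ for every $x\in\uf^{-1}(\ulX^-)$; hence the ``bulk'' of $\uC$ is contracted to $x^-$ in the limit. Near each point $x_{i,j}$, a local analytic coordinate computation — the image curve is transverse to $D_i$ and $D_i$ is the closure of $X^-(x_i^-)$ — shows that the limit of the $i,j$-th branch is a chain retracting toward $x_i^-$, realized after stabilization as a single $\PP^1$ mapping onto $P_i=\overline{B^-x_i^-}$ with degree $[P_i]$, attached to the contracted component $\uC$ at $x_{i,j}$. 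This is exactly the description of $\tF$ in Notation~\ref{not:limit}.

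Concretely I would build the family as follows: take the trivial family $\uC\times\A^1$, blow up the sections $x_{i,j}\times\{0\}$ to introduce the exceptional $\PP^1$'s (possibly after a ramified base change $t\mapsto t^m$ to absorb denominators coming from the weights of $\lambda$ along the normal direction to $D_i$ at $x_i^-$), and define the map on the total space by the closure of the graph of $(x,t)\mapsto\lambda(t)\cdot\uf(x)$. One checks the resulting family of pre-stable maps has curve class $\beta$ in every fiber (degrees are preserved under specialization), has general fiber $\lambda(t)\cdot\uf$, and has special fiber whose stabilization is $\tF$: on the strict transform of $\uC\times\{0\}$ the map is the constant $x^-$, and on each exceptional component the map is the expected degree-$[P_i]$ cover of $P_i$. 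Stabilizing contracts nothing except possibly reparametrizing the $\PP_{i,j}$, giving $\mathrm{stab}(\tF)$. Finally, separatedness of $\fM_{\uGamma}(\ulX)$ forces this special fiber to coincide with $\uf_0$.

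The main obstacle I expect is the local analysis at the points $x_{i,j}$: one must verify that the limit of the transverse branch through $x_{i,j}$ is genuinely a single $\PP^1$ onto $P_i$ (degree one onto $P_i$, not a multiple cover or a longer chain), and that no extra contracted bubbles appear. This is where transversality of $\uf(\uC)$ with $D_i$ at $x_{i,j}$ is essential — it guarantees the normal direction is picked out with multiplicity one, so after the single blow-up (and the base change matched to the $\lambda$-weight) the exceptional curve already carries a non-constant stable map. One also needs that $x_i^-\in D_i$ is a smooth point of $\ulX$ with $P_i$ and $D_i$ meeting transversally there, which is Theorem~\ref{prop:curve-class}(3). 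The bookkeeping of the base change exponent $m$ and checking prestability/class preservation in all fibers is routine but must be done carefully.
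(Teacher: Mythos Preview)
Your proposal is correct and follows essentially the same approach as the paper: extend the $\Gm$-family over $\uC\setminus\{x_{i,j}\}$ via Lemma~\ref{lem:open}, blow up at the points $0\times x_{i,j}$ in $\AA^1\times\uC$, and identify the special fiber with the stabilization of $\tF$ by separatedness.

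The one difference worth noting concerns precisely the obstacle you flag. Rather than a local transversality/weight computation at each $x_{i,j}$, the paper handles the exceptional components by a global argument: since the strict transform of $\uC$ contracts to $x^-$, the exceptional divisors together must carry the full class $\beta=\sum_i k_i[P_i]$; each $E_{i,j}$ connects $x^-$ to $x_i^-$ (the limit of $\lambda(t)\cdot\uf(x_{i,j})$), and $P_i$ is the \emph{unique} $\lambda$-fixed curve joining those two points, so $h'(E_{i,j})=P_i$ with class exactly $[P_i]$. After further blow-ups and base change to make the special fiber normal crossings, each $E_{i,j}$ then has one component mapping isomorphically to $P_i$ and the rest contracted. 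This degree-plus-uniqueness argument sidesteps the local weight bookkeeping entirely, so you need not worry about matching base-change exponents to $\lambda$-weights.
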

\begin{proof}
The proof is similar to that of  \cite[Proposition 2]{KP}. For completeness, we record it below. Consider the family of underlying stable maps
\[
h: \Gm \times \uC \to \X
\]
given by the $\Gm$-action. By Lemma \ref{lem:open}, we could extend $h$ to a morphism of usual schemes:
\[
h: \AA^1 \times (\uC \setminus \{x_{i,j}\}) \to \X.
\]
Let $S \to \AA^1\times \uC$ be a suitable blow-up along the isolated nonsingular points $\{0\times x_{ij}\}$, we obtain a morphism
\[
h': S \to \X.
\]
 
Observe that the limit of $t\cdot f(x_{i,j})$ is $x_i^{-}$ as $t \mapsto \infty$. Hence the exceptional divisor $E_{i,j}$ of $S \to \AA^1\times \uC$ over $x_{i,j}$ connects the two points $x_{i}^{-}$ and $x^{-}$ under the map $h'$. 

Note that the torus action does not change the cross-ratio of the set of points $\{x_{i,j}\}$. By degree consideration, the curve class of $h'|_{E_{i,j}}$ is $[P_i]$, and $h'(E_{i,j}) = P_i$ as $P_i$ is the unique fixed curve joining $x_i^{-}$ and $x^-$ with respect to the chosen $\GG_m$-action. 

After possibly further blow-ups and base changes over the fiber of $0 \in \AA^1$, we may assume that $S$ is non-singular, and each $E_{i,j}$ is a normal crossings divisor. Thus, each $E_{i,j}$ has a single component mapped to $P_i$ isomorphically, and all other components get contracted by $h'$.

Blowing-down the $h'$-contracted components of each $E_{i,j}$, we obtain a morphism $h'': S' \to \X$ which is a family of nodal curves over $\AA^1$. From the above construction, the fiber $h''|_{0}$ over $0 \in \AA^1$ is the stabilization of $\tF$ as required.

%Denote by $E_{i,j}$ the exceptional divisor over $x_{i,j}$. Since the limits of $f(x_{i,j})$ at $t \mapsto \infty$ under the fixed %$\Gm$-action is $x_{i}^{-}$, the image $h_{0}(E_{i,j})$ contains both $x_{i}^{-}$ and $x^{-}$. Note that the torus action does %not change the cross-ratio of the set of points $\{x_{i,j}\}$. Since $\uf$ intersects transversally with $D_{i}$ at $x_{i,j}$, we %conclude that $h'(E_{i,j}) = P_{i}$. Thus, after further birational transformations of $S$, we may assume that $S \to \AA^1$ is a %family of nodal curves. 
%{\red
%Denote again by $E_{i,j}$ the exceptional divisor over $x_{i,j}$. Then $E_{i,j}$ is either an irreducible rational curve, or $E_{i,j} = %x_{i,j}$ in which case $h$ extends to $0\times x_{i,j}$ without further modification. Thus, the limit $\uf_{0}$ can be obtained by %the further contracting the components $\PP_{i,j}$, where $E_{i,j} = x_{i,j}$. This proves the statement.
%}\Qile{Still need to rewrite this part.}

\end{proof}

We then summarize our discussion as follows:

\begin{proposition}\label{prop:limit}
Let $\uf$ be the underlying stable map as in Notation \ref{not:good-map}. Then the limiting underlying stable map $\uf_0$ is given by one of the following situations:
\begin{enumerate}
 \item $\tF$ is stable, and $\uf_0 = \tF$; or
 \item $\sum_i k_i = 1$, $n \leq 1$, and $\uf_0$ is given by $\PP^1 \to P_i$ with the marking mapped to $x^-$; or
 \item $\sum_i k_i = 2$, $n = 0$, and $\uf_0: \PP^1\cup\PP^1 \to P_i\cup P_j$ for some $i,j$, with the unique node mapped to $x^-$.
\end{enumerate}
\end{proposition}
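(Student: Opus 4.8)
The plan is to deduce Proposition \ref{prop:limit} directly from Lemma \ref{lem:adjoint-type-limit}, which already identifies $\uf_0$ with the stabilization of the pre-stable map $\tF$ of Notation \ref{not:limit}. So the only real content is a case analysis of when $\tF$ fails to be stable and what its stabilization looks like. First I would recall the shape of $\tF$: the source is $\uC_1 = \uC \cup \bigcup_{i,j}\PP_{i,j}$ with $\uC \cong \PP^1$ contracted to $x^-$, each rational tail $\PP_{i,j}$ attached at a single node and mapped isomorphically onto $P_i$, and all $n$ markings sitting on the contracted component $\uC$. Stability of $\tF$ fails exactly on the contracted component $\uC$, since every tail $\PP_{i,j}$ carries a non-constant map and is automatically stable; so the question is whether $\uC$, viewed as a genus zero component carrying $n$ markings and $\sum_i k_i$ nodes, has at least three special points.

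The key step is then the trichotomy on $N := n + \sum_i k_i$, the number of special points on $\uC$. If $N \geq 3$, then $\uC$ is already stable, hence $\tF$ is stable and $\uf_0 = \tF$, which is case (1). If $N \leq 2$, $\uC$ gets contracted in the stabilization, and I would enumerate the possibilities for $(n, \sum_i k_i)$ with $\sum_i k_i \geq 1$ — note $\sum_i k_i \geq 1$ because $\uf(\uC)\cap G/H \neq \emptyset$ forces the curve to meet the boundary and hence $\beta \neq 0$ in \eqref{equ:curve-class-ad}. The admissible pairs with $N \le 2$ and $\sum_i k_i \geq 1$ are: $\sum_i k_i = 1, n \le 1$; and $\sum_i k_i = 2, n = 0$. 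In the first subcase the stabilization contracts $\uC$ onto the single point $x^-$, which lies on the unique tail $\PP_{i,j}\cong\PP^1$; if $n=1$ the marking is carried over to the image point $x^-$, giving case (2). In the second subcase, $n=0$ and there are two tails $\PP_{i,j_1}, \PP_{i',j_2}$ (possibly $i = i'$); contracting $\uC$ glues the two tails at a node which maps to $x^-$, producing $\PP^1 \cup \PP^1 \to P_i \cup P_{i'}$, which is case (3).

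I would also note the bookkeeping facts that make the statement clean: the markings never interfere with tail attachment because all $x_{i,j}$ are distinct non-marked points by Notation \ref{not:good-map}(3)–(4); after contraction the marking (in case (2)) lands at $x^-$ precisely because $\tF(\uC) = x^-$; and in every case the image curve class is $\sum_i k_i [P_i] = \beta$ since each tail contributes $[P_i]$ and the contracted component contributes nothing, consistent with \eqref{equ:curve-class-ad}. The main obstacle — really the only thing needing care — is to make sure the enumeration of unstable configurations is exhaustive and that one correctly describes the stabilization in each degenerate case (which special points survive, where the marking goes, and how the remaining components are glued); this is a routine but slightly fiddly exercise in the combinatorics of stable curve stabilization, and there is no substantive geometric difficulty once Lemma \ref{lem:adjoint-type-limit} is in hand.
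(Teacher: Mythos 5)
Your proposal is correct and follows the paper's proof exactly: the paper's own argument is the one-line observation that the proposition follows from Lemma \ref{lem:adjoint-type-limit} together with taking the stabilization of $\tF$, and your case analysis on $n+\sum_i k_i$ is precisely the (omitted) routine verification of what that stabilization looks like. The only tiny quibble is your justification that $\sum_i k_i\ge 1$ in the degenerate cases; the cleaner reason is that if $\sum_i k_i=0$ then $\uf$ is constant, so stability of $\uf$ already forces $n\ge 3$ and one lands in case (1) — but this does not affect the correctness of your enumeration.
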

\begin{proof}
This follows from Lemma \ref{lem:adjoint-type-limit}, and taking the stabilization of $\tF$.
\end{proof}

%\subsection{Curves supported on the center}\label{ss:good-curve}

%In general, the torus fixed points $x_{j}^{-}$ for $j = 1, \cdots, l$ as in Section \ref{ss:BB} do not necessarily lie in the center $\uY$. Thus, the limit underlying stable map $\uf_0$ does not necessarily factors through the center $\uY$. In this case, the analysis of the possible stable log maps over $\uf_0$ becomes quite complicated. In fact, the uniqueness of Proposition \ref{prop:unique-log-lift} could fail when $\uf_0$ does not factor through $\uY$. 

Finally we introduce the following condition for the curve class $\beta$ as in (\ref{equ:curve-class-ad}):

\begin{assumption}\label{assu:good-curve}
$k_i = 0$ as long as the color $D_i$ is not of type (b).
\end{assumption}

\begin{lemma}\label{lem:curve-in-center}
Under Assumption \ref{assu:good-curve}, the underlying stable map $\uf_0$ factors through the center $\uY$.
\end{lemma}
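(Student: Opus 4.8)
The plan is to combine the explicit description of the limit $\uf_0$ given in Proposition~\ref{prop:limit} with the characterization of type (b) colors in Proposition~\ref{prop:color}, which says exactly that the fixed point $x_i^-$ lies on the closed orbit $\uY$ precisely when $D_i$ is of type (b). First I would observe that $\uf_0$ is a stable map whose image is a union of the fixed curves $P_i=\overline{B^-x_i^-}$ together with the fixed point $x^-$, and that these are the only building blocks: by Lemma~\ref{lem:adjoint-type-limit} the map $\uf_0$ is the stabilization of $\tF$, whose components are mapped either to a single $P_i$ (with class $[P_i]$) or contracted to $x^-$. So it suffices to show that every $P_i$ appearing with nonzero multiplicity in $\beta$, as well as the point $x^-$, lies in $\uY$.

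Next I would handle the point $x^-$: since $x^-$ is the unique $B^-$-fixed point and it was defined in Section~\ref{ss:BB} as lying on $\uY$, this is immediate. For the curves $P_i$, recall that $P_i$ appears in $\uf_0$ only when $k_i\neq 0$ in the expression $\beta=\sum_i k_i[P_i]$ of (\ref{equ:curve-class-ad}). Under Assumption~\ref{assu:good-curve}, $k_i\neq 0$ forces $D_i$ to be of type (b), hence by Proposition~\ref{prop:color} the fixed point $x_i^-$ lies in $\uY$. Since $\uY$ is the unique closed $G$-orbit, it is in particular $B^-$-stable and closed, so it contains the orbit closure $\overline{B^-x_i^-}=P_i$. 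Therefore every irreducible component of the image of $\uf_0$ is contained in $\uY$.

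Finally, since $\uY$ is closed, the set-theoretic containment $\uf_0(\uC_0)\subset\uY$ implies that $\uf_0$ factors (as a morphism of schemes) through the closed subscheme $\uY$: the scheme-theoretic image of $\uf_0$ is a closed subscheme supported on $\uY$, and because $\uC_0$ is reduced the map factors through $\uY_{\mathrm{red}}=\uY$. I would write this last step using the universal property of the scheme-theoretic image, or simply note that $\uf_0^{-1}(\uY)$ is a closed subscheme of the reduced curve $\uC_0$ containing all closed points, hence equal to $\uC_0$. I do not expect any genuine obstacle here; the only mild care needed is to enumerate the three cases of Proposition~\ref{prop:limit} and check that in each one the image lies in $\uY$ — but in all three cases the image is built from the $P_i$ with $k_i\neq 0$ and the point $x^-$, so the argument above applies uniformly. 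The substantive input is entirely contained in Proposition~\ref{prop:color} and Assumption~\ref{assu:good-curve}; this lemma is just the bookkeeping that turns them into the desired factorization.
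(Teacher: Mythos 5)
Your proof is correct and follows essentially the same route as the paper: the paper's own argument is exactly that under Assumption \ref{assu:good-curve} only type (b) colors contribute, so by Proposition \ref{prop:color} each relevant $x_i^-$ lies in $\uY$ and hence so does $P_i=\overline{B^-x_i^-}$. Your write-up merely adds the (harmless) bookkeeping about $x^-$, the three cases of Proposition \ref{prop:limit}, and the scheme-theoretic factorization through the closed subscheme $\uY$.
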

\begin{proof}
This follows from the fact that the curve $P_{i} = \overline{B\cdot x_{i}^{-}}$ lies in $\uY$ as in Theorem \ref{prop:curve-class} and Proposition \ref{prop:color}.
\end{proof}

%{\red
%We recall that the following conception introduced in \cite{A1}:
%\begin{definition}\label{def:good}
%$\underline{\X}$ is called {\em special} if $x_{i}^- \in \uY$ for all $i = 1,\cdots,l$.
%\end{definition}Note that any curve class satisfies Assumption \ref{assu:good-curve} if $\underline{\X}$ is good. Assumption \ref{assu:good-curve} will be assumed throughout the rest of this paper.}
\section{Specialization of stable log maps}\label{sec:log-limit}

\subsection{Uniqueness of the lifting}

We have analyzed the specialization of the underlying stable map $\uf$ in Notation \ref{not:good-map} under the given $\GG_m$-action. Note that the underlying stable map $\uf$ uniquely determines a stable log map $f$. Thus by the properness of $\fM_{\Gamma}(\X)$, such limit as stable log maps exists, and is unique. But for our purposes, we need to understand all possible liftings over the limiting underlying stable map in Notation \ref{not:limit} as a stable log maps.  We first show that

\begin{proposition}\label{prop:unique-log-lift}
Let $\uf_0$ be a underlying stable map given by the stabilization of $\tF$ in Notation \ref{not:limit}. Under Assumption \ref{assu:good-curve}, there exists up to a unique isomorphism at most one minimal stable log map $f_0$, whose underlying stable map is given by $\uf_0$. 
\end{proposition}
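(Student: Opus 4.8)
The plan is to reduce the statement to a combinatorial uniqueness problem about the tropical/minimal log data carried by $\uf_0$, and then to solve that problem by exploiting Lemma~\ref{lem:curve-in-center}. First I would recall that a minimal stable log map over a geometric point is determined by its underlying stable map together with a choice of fine saturated monoid $M$ (the minimal monoid) and certain combinatorial enhancements: at each node, a contact order $e$ and an element $\rho$ of the ghost monoid with prescribed image, and at each component a section describing how the map meets the boundary divisors $D_1,\dots,D_l$; compatibility at nodes then cuts these data down. By the structure theory of \cite{GS, AC, ACGS}, giving $f_0$ up to isomorphism is equivalent to giving this tropical data, so it suffices to show the tropical data is rigid.

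The key steps, in order: (1) Use Lemma~\ref{lem:curve-in-center} to note that $\uf_0$ factors through $\uY$, the unique closed $G$-orbit, which is itself a projective homogeneous space with \emph{empty} boundary. Hence each irreducible component of $\uC_0$ maps into $\uY$, where the log structure pulled back from $\X$ is tied only to the $D_i$ meeting $\uY$, and along $\uY$ all these divisors appear with the multiplicities dictated by the curve class $\beta=\sum k_i[P_i]$ and the intersection numbers $(P_i.D_j)=\delta_{ij}$ of Theorem~\ref{prop:curve-class}. (2) Compute, component by component, the contact orders forced at the special points: the marked points of $\uf_0$ all map to $x^-\in\uX^-$ which lies off every color, so they carry trivial contact order; each node of $\uC_0$ lies over $x^-$ as well (Proposition~\ref{prop:limit}), and the rational tails $\PP_{i,j}\to P_i$ meet exactly $D_i$ with multiplicity $1$ by $(P_i.D_j)=\delta_{ij}$. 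Thus the contact orders are completely determined. (3) Identify the minimal monoid: it is the cokernel-type monoid built from the tree of components with edge lengths equal to the forced contact multiplicities; because the only nontrivial contact contributions come from the $k_i$ tails each of multiplicity one, the minimal monoid and the position of each component in the corresponding tropical curve $\Gamma$ are pinned down, with no remaining moduli. (4) Check that the section data on the central component $\uC$ (which is contracted to $x^-$) is forced: since $\uf_0|_{\uC}$ is constant with image in the open stratum $\uX^-$ away from all colors, the only freedom would be in the piecewise-linear function recording orders of vanishing, and balancing at the attaching nodes $x_{i,j}$ against the tails forces it uniquely. Assembling (1)--(4) shows any two minimal log lifts have isomorphic tropical data, hence are isomorphic as minimal stable log maps.

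The main obstacle I expect is step~(3)--(4): controlling the minimal monoid and the possible piecewise-linear functions on the tropical curve. A priori a contracted central component mapping to a point of a \emph{boundary} stratum could carry extra monoid generators and a one-parameter family of admissible PL functions; this is exactly the phenomenon the authors warn about in the remark that uniqueness ``could fail without Assumption~\ref{assu:good-curve}''. The point of Assumption~\ref{assu:good-curve}, via Lemma~\ref{lem:curve-in-center}, is that $\uf_0$ lands in $\uY$ where, although $\uY$ sits inside the boundary, the relevant log structure is pulled back along a map that factors through the \emph{open} orbit in each color's normal direction in a controlled way---so that the constraints at the nodes $x_{i,j}$ and the absence of contact at markings leave no slack. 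I would therefore spend most of the argument verifying carefully that the balancing condition at each $x_{i,j}$, together with the single-contact condition from $(P_i.D_j)=\delta_{ij}$, forces both the monoid and the PL function, and that no global automorphism of the tropical curve permutes distinct tails (they are distinguished by which $D_i$ they meet). Minimality then fixes the monoid on the nose, giving uniqueness up to unique isomorphism.
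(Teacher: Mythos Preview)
Your proposal correctly handles the combinatorial layer but has a genuine gap at the level of log structures. You assert that ``giving $f_0$ up to isomorphism is equivalent to giving this tropical data,'' and then spend steps (2)--(4) pinning down the tropical data (contact orders, minimal monoid, piecewise-linear function). But this equivalence is exactly what has to be proved, and it is not a formal consequence of \cite{GS, AC, ACGS}. Fixing the underlying map $\uf_0$ together with the map of characteristic sheaves $\bar f_0^\flat:\uf_0^*\ocM_\X\to\ocM_{C_0}$ still leaves the lift $f_0^\flat:\uf_0^*\cM_\X\to\cM_{C_0}$ undetermined up to a torsor under $\hom(\ocM^{gp},\cO_{\uC_0}^*)$. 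Two minimal log lifts $f_{01},f_{02}$ with identical tropical data will differ on each torsor $\cT_\X(\delta)$ by a global unit $u_\delta\in\CC^*$, and the content of the proposition is that this discrepancy can be absorbed into an automorphism of the log point $S$ (hence of $C_0/S$). Your balancing and PL-function arguments live entirely on the level of $\ocM$, so they cannot see these units.

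The paper's proof addresses precisely this point. It first establishes the splitting $\ocM(\Phi)^{gp}=\ocM^{gp}\oplus\sum_e\NN_e^{gp}$ (Lemma~\ref{lem:min-monoid}(2)); this splitting is where Assumption~\ref{assu:good-curve} actually enters, via the fact that $\uf_0$ lands in $\uY$ so that the chart $\gamma:\ocM\to\ocM_\X$ of (\ref{equ:chart}) is available globally along the image. The assignment $\delta\mapsto u_\delta$ is then a group homomorphism $h^{gp}:\ocM^{gp}\to\CC^*$, and the splitting lets one promote $h^{gp}$ to an automorphism $\tilde h$ of $\cM_S$ which fixes both $\cO^*$ and the edge summand $\sum_e\NN_e$; thus $\tilde h$ is an automorphism of the log curve $C_0/S$ and conjugates $f_{01}$ to $f_{02}$. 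None of this is visible from the tropical picture alone. (Incidentally, your claim that the markings ``carry trivial contact order'' because they land in $\uX^-$ conflates the colors $D_i$, which are $B$-stable but not $G$-stable and are not the log boundary, with the actual boundary $\uX\setminus G/H$; the image $x^-$ lies in the closed orbit $\uY$, the deepest log stratum, so the contact orders at the markings are the ones prescribed by $\Gamma$ and need not be trivial.)
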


Assume such lifting $f_0$ is given. We first calculate the possible characteristic monoid $\ocM_{S}$ with the given underlying stable map $\uf_0$. Denote by $\ocM := \ocM_{\Y,y}$ for any point $y \in \Y \subset \X$. Recall from \cite[Proposition 5.8]{A1} that there is a global morphism from the globally constant sheaf of monoids:
\begin{equation}\label{equ:chart}
\gamma: \ocM \to \ocM_{\X}
\end{equation}
such that
\begin{enumerate}
 \item $\gamma$ lifts to a chart of $\cM_{\X}$ Zariski locally on $\X$;
 \item the restriction $\gamma|_{\Y}: \ocM \to \ocM_{\Y}$ is an isomorphism of sheaves of monoids.
\end{enumerate}

The minimal monoid is defined in the Deligne-Faltings case in \cite[Section 4.1]{AC} and for Zariski log structures in \cite[Construction 1.16]{GS}. In what follows, we will mimic \cite[Section 4.1]{AC}, and reformulate the minimal monoid in a slightly different manner for the convenience of our argument. It should be straight forward to verify that the following is equivalent to the definitions in \cite{AC,GS} in our particular case.

\begin{construction}\label{cons:monoid}
Denote by $\underline{\Phi}$ the dual graph of the underlying curve $\uC_0$. This means that $\underline{\Phi}$ is a connected graph with the set of vertices $V(\underline{\Phi})$ given by the irreducible components of $\uC_0$ and the set of edges $E(\underline{\Phi})$ given by the nodes of $\uC_0$. For each $v \in V(\underline{\Phi})$ and $e \in E(\underline{\Phi})$, denote by $\uZ_v$ and $p_v$ the irreducible component and node respectively. Denote by $\Phi$ the {\em marked graph} obtained by decorating $\underline{\Phi}$ with the following data:
\begin{enumerate}
 \item For each $v \in V(\underline{\Phi})$ we associate the monoid $\ocM_v := \ocM$.

 \item For each $e \in E(\underline{\Phi})$ we associate the free monoid $\NN_e \cong \NN$. We use $e$ to denote the generator of $\NN_e$.

 \item We fix an orientation on the graph as follows. If $\uf_0 = \tF$, then each edge $e$ is oriented from the teeth $\PP_{ij}$ to the handle $\uC$. Otherwise, $\uC_0$ has at most two components, in which case we fix an arbitrary orientation of the unique edge.

 \item For each edge $e$ orienting from $v_1$ to $v_2$, denote by $c_e \in (\ocM^{\vee})^{gp}$ the $\gamma$-contact order of the one cycle $(\uf_0)_*Z_{v_1}$ as in \cite[Definition 3.4]{A1}.  In particular, $c_e$ defines a group morphism $\ocM^{gp} \to \ZZ$.
\end{enumerate}

Now for each edge $e$ orienting from $v_1$ to $v_2$, and each element $\delta \in \ocM^{gp}$, we introduce the relation
\begin{equation}\label{equ:edge-relation}
\delta_{v_1} + c_e(\delta)\cdot e = \delta_{v_2}
\end{equation}
where $\delta_{v_i}$ denotes the element in $\ocM_{v_i}^{gp}$ given by $\delta$. Denote by $\ocM(\Phi)^{gp}$ the lattice given by $\sum_{e}\NN^{gp}_e \oplus \sum_{v}\ocM^{gp}_v$ modulo the relations (\ref{equ:edge-relation}) for all $e$. We thus have a natural morphism 
\begin{equation}\label{equ:canonical-mon-elm}
\psi: \sum_{e}\NN_e \oplus \sum_{v}\ocM_v \to \ocM(\Phi).
\end{equation}
Denote by $\ocM(\Phi)_{\QQ}^{+}$ the rational cone generated by the image of $\phi$ in $\ocM(\Phi)_{\QQ} := \ocM(\Phi)\otimes_{\ZZ}\QQ$. Then we write
\begin{equation}\label{min:monoid}
\ocM(\Phi) := \ocM(\Phi)^{gp} \cap \ocM(\Phi)_{\QQ}^{+}.
\end{equation}
\end{construction}

\begin{lemma}\label{lem:min-monoid}
Notations and assumptions as above, assume $\uf_0$ lifts to a minimal stable log map $f_0$. Then we have
\begin{enumerate}
 \item $\ocM_{S} = \ocM(\Phi)$, and
 \item a natural splitting $\ocM(\Phi)^{gp} = \ocM^{gp} \oplus \sum_{e}\NN_e^{gp}$.
\end{enumerate}
\end{lemma}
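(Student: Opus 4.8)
The plan is to unwind the definition of the minimal (basic) monoid of \cite[Section 4.1]{AC} (equivalently \cite[Construction 1.16]{GS}) for the stable log map $f_0$ and to match it term by term with Construction~\ref{cons:monoid}. Recall that, writing $\eta_v$ for the generic point of the component $\uZ_v$ and $\rho_e\in\ocM_S$ for the smoothing parameter of the node $p_e$, the minimal monoid $\ocM_S$ is constructed as follows: $\ocM_S^{gp}$ is the quotient of $\bigoplus_v\big(f_0^*\ocM_{\X}\big)^{gp}_{\eta_v}\oplus\bigoplus_e\NN_e^{gp}$ by the relations expressing compatibility of the two pullbacks of $\ocM_{\X}$ along the branches at each node, and $\ocM_S$ is the submonoid of $\ocM_S^{gp}$ cut out by the rational cone spanned by the images of the local sections of $\ocM_{\X}$ and of the generators $\rho_e$. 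So I must (i) compute the vertex groups, (ii) compute the node relations, and (iii) analyse the resulting presentation.

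For (i): by Assumption~\ref{assu:good-curve} and Lemma~\ref{lem:curve-in-center} the map $\uf_0$ factors through $\uY$; since $\gamma|_{\Y}\colon\ocM\to\ocM_{\Y}$ is an isomorphism of (constant) sheaves of monoids by \cite[Proposition 5.8]{A1}, the pullback $f_0^*\ocM_{\X}$ restricted to each $\uZ_v$ has constant characteristic sheaf canonically identified with $\ocM_v:=\ocM$. Hence $\bigoplus_v(f_0^*\ocM_{\X})^{gp}_{\eta_v}=\bigoplus_v\ocM_v^{gp}$, and the monoid generated by the local sections of $\ocM_{\X}$ is the image of $\sum_v\ocM_v$. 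For (ii): near a node $p_e$ the characteristic monoid of the log curve is $\ocM_S\oplus_{\NN}\NN^2$ with $\NN\to\NN^2$ the diagonal and $1\mapsto\rho_e$, the two branches corresponding to the two copies of $\ocM$; the gluing condition at $p_e$ imposes, for each $\delta\in\ocM^{gp}$, that the two images of $\delta$ differ by $c_e(\delta)\cdot\rho_e$, where $c_e$ is exactly the $\gamma$-contact order of the one-cycle on the tail side of $e$ as in \cite[Definition 3.4]{A1}. This is precisely relation~(\ref{equ:edge-relation}) under the identification $e\mapsto\rho_e$.

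Matching presentations then gives $\ocM_S^{gp}=\ocM(\Phi)^{gp}$ compatibly with the spanning cones, whence $\ocM_S=\ocM_S^{gp}\cap\ocM(\Phi)_{\QQ}^{+}=\ocM(\Phi)$, which proves (1). The only genuinely substantive input is the identification in (ii) of the coefficient of $\rho_e$ with the contact order of the adjacent one-cycle; granting \cite[Definition 3.4]{A1} and the standard local picture of log smooth curves this is bookkeeping, but one must track the orientations of the edges and the signs of the contact orders carefully, and this is the main point that needs to be checked.

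For (2): the dual graph $\underline{\Phi}$ is a tree --- star-shaped in the case $\uf_0=\tF$ of Notation~\ref{not:limit}, and with at most one edge in the remaining cases of Proposition~\ref{prop:limit}. Take the root $v_0$ to be the handle $\uC$ (the sink under the orientation of Construction~\ref{cons:monoid}(3)) in the first case, and either vertex in the others. For any vertex $v$, iterating (\ref{equ:edge-relation}) along the unique reduced path from $v_0$ to $v$ expresses $\delta_v$ as $\delta_{v_0}$ plus an integral combination of the edge generators on that path; hence $\ocM(\Phi)^{gp}$ is generated by $\ocM_{v_0}^{gp}$ together with the $\NN_e^{gp}$. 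Because $\underline{\Phi}$ is a tree these relations form a triangular (invertible) system in the variables $\{\delta_v\}_{v\neq v_0}$, so after eliminating them no relation survives among $\ocM_{v_0}^{gp}$ and $\bigoplus_e\NN_e^{gp}$; a rank count using $|V(\underline{\Phi})|=|E(\underline{\Phi})|+1$ confirms that the resulting map $\ocM^{gp}\oplus\sum_e\NN_e^{gp}\to\ocM(\Phi)^{gp}$ is an isomorphism. Under the canonical identification $\ocM_{v_0}^{gp}=\ocM^{gp}$ this is the asserted splitting.
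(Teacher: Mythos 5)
Your proposal is correct and takes essentially the same route as the paper's proof: part (1) identifies the contact order at each node with $c_e$ (via the fact that the tail component carries no markings) and then matches the presentation against the definition of minimality in \cite[Section 4]{AC}/\cite[Construction 1.16]{GS}, which you simply unwind in more detail, and part (2) is the same elimination of the vertex variables $\delta_v$, $v\neq v_0$, using the star-shaped (tree) structure of $\underline{\Phi}$ and relation (\ref{equ:edge-relation}). The paper's version is terser but contains no idea beyond what you wrote.
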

\begin{proof}
Notice that the possibilities of $\uf_0$  are listed in Proposition \ref{prop:limit}. For an edge $e$ orienting from $v_1$ to $v_2$, there is no marking on $\uZ_{v_1}$. Thus, the contact order of the node $p_e$ is given by $c_e$. Now the first statement follows from the definition of minimality in \cite[Section 4]{AC} or \cite[Construction 1.16]{GS}.

Consider the second statement. We first assume that $\uf_0 = \tF$. Consider the vertex $v_0$ associated to the handle of $\uC_0$. Then for any other vertex $v$, there is a unique edge $e$ orienting from $v$ to $v_0$. Then the relation (\ref{equ:edge-relation}) uniquely expresses $\delta_{v} = \delta_{v_0} - c_e(\delta)\cdot e$ for any $\delta$, which proves (2) in this case. The two other cases in Proposition \ref{prop:limit} can be proved similarly.
\end{proof}

\begin{proof}[Proof of Proposition \ref{prop:unique-log-lift}]
By Lemma \ref{lem:min-monoid} and (\ref{equ:canonical-mon-elm}), we have the canonical map of monoids:
\[
\psi_e: \sum_e \NN_e \to \ocM(\Phi).
\]
Denote by $C_0^{\sharp} \to S^{\sharp}$ the canonical log curve associated to the underlying curve $\uC_0$. We then fix a chart $\gamma^{\sharp}: \sum_e \NN_e \to \cM_{S^{\sharp}}$. Since $C_0 \to S$ is obtained by pulling back $C_0^{\sharp} \to S^{\sharp}$, and $\uS$ is a geometric point, we may assume that 
\begin{equation}\label{equ:log-curve}
\cM_S = \ocM(\Phi)\oplus_{\psi_e,\sum_e \NN_e, \gamma^{\sharp}}\cM_{S^{\sharp}}.
\end{equation}
This naturally associated with a morphism of log structures $\cM_{S^{\sharp}} \to \cM_S$, which is unique up to a unique isomorphism by Lemma \ref{lem:min-monoid}(2). This defines the log curve $C_0 \to S$ up to a unique isomorphism together with a chart $\gamma_S: \ocM(\Phi) \to \cM_S$. 

Now assume that we have two liftings $f_{01}: C_0/S \to \X$ and $f_{02}: C_0/S \to \X$ over $\uf_0$. We need to verify that $f_{01}$ and $f_{02}$ is differ by an isomorphism of $C_0/S$. We first notice that the two morphisms $\bar{f}_{01}^{\flat} = \bar{f}_{02}^{\flat}: \uf_0^{*}\ocM_{\X} \to \ocM_{C_0}$ on the level of characteristic monoids coincide. This is because the discrete data of $f_{01}$ and $f_{02}$ are given by  the same marked graph $\Phi$. We may denote both $\bar{f}_{01}^{\flat}$ and  $\bar{f}_{02}^{\flat}$ by $\bar{f}_0^{\flat}$. 

Next consider the quotients $q_{\X}: \cM_{\X} \to \ocM_{\X}$ and $q_{C_0}: \cM_{\C_0} \to \ocM_{X_0}$. 
For any $\delta \in \ocM$, denote by 
\begin{equation}\label{equ:torsors}
\cT_{\X}(\delta) := q_{\X}^{-1}(\gamma(\delta)) \ \ \ \mbox{and} \ \ \ \cT_{C_0}(\delta) := q_{C_0}^{-1}(\bar{f}^{\flat}_0\circ \gamma(\delta))
\end{equation} 
the two $\cO^*$-torsors over $\uC_0$. For each $i$, we obtain an isomorphism of torsors:
\begin{equation}\label{equ:torsor-iso}
f_{0i}^{\flat}|_{\cT_{\X}(\delta)}: \cT_{\X}(\delta) \to \cT_{C_0}(\delta),
\end{equation}
induced by the corresponding morphisms of log structures. Since those are isomorphisms of the same torsors over a proper curve, we have
\begin{equation}\label{equ:torsor-equal}
f_{01}^{\flat}|_{\cT_{\X}(\delta)} = \log u_{\delta} + f_{02}^{\flat}|_{\cT_{\X}(\delta)}
\end{equation}
for a non-zero constant $u_{\delta}$ uniquely determined by $\delta$, $f_{01}$, and $f_{02}$. This defines a map of sets:
\[
h^{gp}: \ocM^{gp} \to \CC^*.
\]
In particular, $h$ is a morphism of groups. 

By Lemma \ref{lem:min-monoid}(2), the chart $\gamma_S$ induces a morphism of groups
\[
\gamma_S^{gp}: \ocM^{gp} \oplus \sum_{e}\NN_e^{gp} \to \cM_{S}
\]
Then we obtain a morphism of groups
\[
\tilde{h}^{gp}: \cM_{S}^{gp} \to \cM_{S}^{gp}
\]
such that $\tilde{h}^{gp}|_{\cO^*} = id_{\cO^*}$, $\tilde{h}^{gp}|_{\gamma_S^{gp}(\sum\NN_e)} = id_{\gamma_S^{gp}(\sum\NN_e)}$, and $\tilde{h}^{gp}(\gamma_{S}^{gp}(\delta)) = \log h(\delta) + \gamma_{S}^{gp}(\delta)$ for any $\delta \in \ocM$. Thus the restriction $\tilde{h} := \tilde{h}^{gp}|_{\cM_{S}}$ defines an isomorphism of the log structure $\cM_{S}$. Furthermore, since $\tilde{h}$ fixes the image $\gamma_{S}(\sum_e\NN_e)$, it induces an isomorphism of the log curve $C_0/S$. Denote by $\tilde{h}_{C_0}: \cM_{C_0} \to \cM_{C_0}$ the isomorphism induced by $\tilde{h}$. Then (\ref{equ:torsor-equal}) and the construction of $\tilde{h}$ imply that 
\[
f_{01}^{\flat} = f_{02}^{\flat} \circ \tilde{h}_{C_0}.
\]
Thus the log maps $f_{01}$ and $f_{02}$ is differ by an isomorphism of $C_0/S$. The uniqueness of $\tilde{h}$ follows from the uniqueness of $u_{\delta}$ as in (\ref{equ:torsor-equal}).
\end{proof}

\subsection{Existence of the lifting}

The proof of Proposition \ref{prop:unique-log-lift} can be modified to show the existence of lifting as follows:

\begin{lemma}\label{lem:exist-log-lift}
Assume $\fM_{\Gamma}(\X) \neq \emptyset$, and the curve class $\beta$ satisfies Assumption \ref{assu:good-curve}. For each underlying stable map $\uf_0$ given by the stabilization of $\tF$ in Notation \ref{not:limit} with discrete data $\underline{\Gamma}$, there exist a unique stable log map $[f_0] \in \fM_{\Gamma}(\X)$ above $\uf_0$ with discrete data $\Gamma$. 
\end{lemma}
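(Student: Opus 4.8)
The plan is to follow the proof of Proposition~\ref{prop:unique-log-lift} closely, upgrading the uniqueness argument there into an existence statement. The key point is that Proposition~\ref{prop:unique-log-lift} already produces, from the discrete data, the marked graph $\Phi$, the minimal monoid $\ocM(\Phi)$, the log curve $C_0/S$ together with its chart $\gamma_S\colon\ocM(\Phi)\to\cM_S$, and the characteristic-level map $\bar f_0^\flat\colon\uf_0^*\ocM_{\X}\to\ocM_{C_0}$; by properness of $\fM_\Gamma(\X)$ (together with $\fM_\Gamma(\X)\neq\emptyset$ and Lemma~\ref{lem:exist-log-lift}'s hypotheses ensuring the limit stays in the relevant locus), a lift exists abstractly, and what remains is to pin it down and check its discrete data is exactly $\Gamma$. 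So the real content is mostly bookkeeping.

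First I would invoke properness of $\fM_\Gamma(\X)$: starting from a general $[f]\in\fM_\Gamma(\X)$ whose underlying map is as in Notation~\ref{not:good-map} (such $[f]$ exist by the nonemptiness hypothesis together with the genericity statements in Section~\ref{ss:underlying-deg}), the $\Gm$-action gives a one-parameter family, and the valuative criterion produces a limiting minimal stable log map $[f_0]\in\fM_\Gamma(\X)$. Its underlying stable map is the limit $\uf_0$ computed in Lemma~\ref{lem:adjoint-type-limit}, i.e.\ the stabilization of $\tF$ in Notation~\ref{not:limit}, and it has discrete data $\Gamma$ because discrete data are locally constant in families (the curve class is $\Gm$-invariant, the genus and number of markings are fixed, and the contact orders are preserved under specialization of stable log maps). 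Under Assumption~\ref{assu:good-curve} the source curve $\uf_0$ factors through $\uY$ by Lemma~\ref{lem:curve-in-center}, so the minimal monoid of any such $[f_0]$ is the combinatorially determined $\ocM(\Phi)$ of Construction~\ref{cons:monoid} by Lemma~\ref{lem:min-monoid}. The uniqueness of $[f_0]$ is then exactly Proposition~\ref{prop:unique-log-lift}.

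The one genuine point to verify is that this abstract limit $\uf_0$ really is the stabilization of $\tF$ for \emph{every} choice of $\tF$ allowed by Notation~\ref{not:limit} (i.e.\ for every admissible assignment of the teeth $\PP_{i,j}$ among the curves $P_i$ compatible with $\beta=\sum k_i[P_i]$), not just for one of them. Here one observes that the combinatorial type of $\uf_0$ — which $P_i$ the teeth map to, and with what multiplicities — is forced by the curve class $\beta$ and the fact that $P_1,\dots,P_l$ form a $\Z$-basis of $N_1(\uX)$ (Theorem~\ref{prop:curve-class}(3)); and conversely, given any such $\uf_0$, one can realize it as the limit of a suitable $\uf$ as in Notation~\ref{not:good-map} by choosing the attaching points and cross-ratios appropriately (the $\Gm$-action preserves cross-ratios, so any configuration of the $x_{i,j}$ on $\uC\cong\PP^1$ is attainable as a limit), and then applying the argument above. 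I expect this last realizability/genericity check — ensuring that \emph{every} $\uf_0$ of the prescribed combinatorial type arises as a $\Gm$-limit of some $\uf$ satisfying all four conditions of Notation~\ref{not:good-map}, in particular condition (1) that $\uf(\uC)\cap G/H\neq\emptyset$ — to be the main obstacle, since it requires knowing that $\fM_\Gamma(\X)$ is nonempty in a way compatible with these constraints; but this follows from combining the nonemptiness input with the open-density of the locus described in Notation~\ref{not:good-map} inside the component of $\fM_\Gamma(\X)$ under consideration.
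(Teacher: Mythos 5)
There is a genuine gap, and it sits exactly at the step you yourself flag as ``the main obstacle.'' Your strategy is to produce the lift $[f_0]$ purely as a $\Gm$-limit of a non-degenerate map $\uf$ as in Notation~\ref{not:good-map}, and then to claim that \emph{every} $\uf_0$ of the prescribed combinatorial type arises this way. But the justification you offer does not establish this. Preservation of cross-ratios under the $\Gm$-action only says that the limiting configuration of the points $p_1,\dots,p_n;x_{i,j}$ on the handle is \emph{determined by} the configuration of markings and intersection points with the $D_i$ on the source $\uC\cong\PP^1$; it does not show that every point of $B=M_{0,n+\sum k_i}/\fS$ is realized by some $\uf$ satisfying all the conditions of Notation~\ref{not:good-map}. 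Likewise, open density of the non-degenerate locus inside $\fM_\Gamma(\X)$ tells you nothing about the image of the resulting ``configuration map'' to $B$: that map could a priori fail to be dominant, and even if dominant its image could miss a closed subset of $B$, so some $\uf_0$ would receive no lift by your argument. Worse, the statement you need here is essentially the surjectivity of $\cG\colon\fM_{\lambda,\infty}\to B$, which the paper only establishes in Proposition~\ref{prop:attractor} \emph{using} Lemma~\ref{lem:exist-log-lift}; so your route is in danger of being circular with the later part of the argument.

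The paper's proof is designed precisely to sidestep this. It uses properness and the specialization of Section~\ref{ss:underlying-deg} only to produce \emph{one} stable log map realizing the marked graph $\Phi$. For an \emph{arbitrary} $\uf_0$ of that combinatorial type it then constructs the lift directly: the minimal monoid $\ocM(\Phi)$ and the log curve $C_0/S$ are determined as in the proof of Proposition~\ref{prop:unique-log-lift}, the characteristic-level map $\bar f_0^\flat$ is forced by $\Phi$, and the actual morphism of log structures $f_0^\flat$ is obtained by exhibiting isomorphisms of the $\cO^*$-torsors $\cT_\X(\delta)\to\cT_{C_0}(\delta)$ for $\delta$ in a basis of $\ocM^{gp}$. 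The only obstruction to such an isomorphism over a genus zero curve is a component-wise degree match, and this is a purely combinatorial condition depending only on $\Phi$ --- hence it holds for every $\uf_0$ of this type because it holds for the one example already known to exist. If you want to salvage your approach, you would have to prove the surjectivity of the configuration map onto $B$ independently, which is a nontrivial geometric statement about curves in the wonderful compactification; the torsor construction is the cheaper and intended argument.
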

\begin{proof}
The proof of this statement is similar to the construction in \cite[Section 4.2]{A1}.

We first observe that there exists a stable log map realizing the marked graph $\Phi$. Indeed, this follows from the condition $\fM_{\Gamma}(\X) \neq \emptyset$, the properness of $\fM_{\Gamma}(\X) \neq \emptyset$, and the specialization of non-degenerate stable maps in Section \ref{ss:underlying-deg}.

Notice that the discrete data of $f_0$ is uniquely determined by the marked graph $\Phi$ associated to underlying stable map $\uf_0$ as in Construction \ref{cons:monoid}. This implies that the monoid in (\ref{min:monoid}) is sharp.  The same construction in (\ref{equ:log-curve}) uniquely determines the log curve $C_0/S$ up to a unique isomorphism. Since the underlying stable map is given, it remains to construct morphism of log structures $f_0^{\flat}: \uf_0^*\cM_{\X} \to \cM_{C_0}$. Since on the level of characteristic sheaves monoids, the morphism $\bar{f}_0^{\flat}: \uf_0^*\ocM_{\X} \to \ocM_{C_0}$ has been determined by the graph $\Phi$, it sufficies to construct $(f_0^{\flat})^{gp}: \uf_0^*\cM_{\X}^{gp} \to \cM_{C_0}^{gp}$.

Note that the discrete data determines $\bar{f}_0^{\flat}$ hence the torsors as in (\ref{equ:torsors}). Similarly as in (\ref{equ:torsor-iso}), given $(f_0^{\flat})^{gp}$ is equivalent to construct isomorphisms of torsors for each $\delta \in \ocM^{gp}$. Hence to construct $(f_0^{\flat})^{gp}$, it suffices to find a collection of isomorphisms of torsors for each $\delta $ in a basis of $\ocM^{gp}$. Note that an isomorphism of two $\cO^*$-torsors over genus zero curves  exists if and only if the degree of the torsors are compatible component-wise. Such compatibility only depends again on the discrete data in Construction \ref{cons:monoid}, and follows from the existence of stable log maps realizing the marked graph $\Phi$. This proves the existence. 

\end{proof}

%%%%%%%%%%%%%%%%%%%%%%%%%%%%%%%%%%%%%Rationality
\section{Irreducibility and unirationality}\label{sec:rationality}

\subsection{The irreducibility}

\begin{lemma}\label{lem:group-move}
Let $f: C \to \X$ be a genus zero stable log map with discrete data $\Gamma$. Assume that $\uC$ is irreducible, and $f(C) \cap G/H \neq \emptyset$. Then there is a nonempty open subset $V \subset G$ such that for any $g \in V$, the composition $g \circ f$ is a $\Gamma$-stable log map with the underlying map $\underline{g\circ f}$ satisfying the conditions in Notation \ref{not:good-map}.
\end{lemma}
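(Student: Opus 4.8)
The plan is to move the given stable log map by the $G$-action on $\X$. Since $G$ acts on $\X$ by automorphisms of log schemes extending the action on $\ulX$ and fixing the boundary $\ulX\setminus G/H$, for every $g\in G$ the composite $g\circ f$ is again a stable log map with the same source log curve; as $G$ is connected it acts trivially on $H_2(\ulX)$ and preserves contact orders, so $g\circ f$ still has discrete data $\Gamma$. Hence it suffices to show that each of conditions (1)--(4) of Notation~\ref{not:good-map} holds for $g$ ranging over a nonempty Zariski-open subset $V_j\subset G$, and then to set $V:=V_1\cap\cdots\cap V_4$, which is nonempty because $G$ is irreducible. Condition (1) holds for all $g$ (the source $\uC\cong\PP^1$ is unchanged and $G/H$ is $G$-stable), so $V_1=G$; write $\Gamma^{\circ}:=\uf(\uC)\cap G/H$, a dense open of the irreducible set $\uf(\uC)$, and $F:=\uf(\uC)\setminus\Gamma^{\circ}$, a finite set. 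For any point $y\in\ulX$ the set $\{g\in G: g\cdot y\in\ulX^-\}$ is open and nonempty: if it were empty then $G\cdot y\subset D_1\cup\cdots\cup D_l$, so the irreducible set $\overline{G\cdot y}$, which contains the closed orbit $\uY$, would lie in a single color $D_i$, contradicting $x^-\in\uY\cap\ulX^-$ (Lemma~\ref{lem:open}). Intersecting these open sets over the finitely many images $\uf(p_1),\dots,\uf(p_n)$ gives $V_4$, hence condition (4).

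For condition (2) I would use that a curve meeting the open orbit can be swept off any subvariety of codimension $\geq 2$. Let $Z:=\ulX\setminus U$, so $\codim Z\geq 2$. The finitely many points of $F$ are moved off $Z$ by suitable open sets $\{g:g\cdot y\notin Z\}$, nonempty by the argument above applied to the closed set $Z$. For $\Gamma^{\circ}$, form the incidence set $I:=\{(g,c)\in G\times\uf^{-1}(G/H): g\cdot\uf(c)\in Z\}$; since $G$ acts transitively on $G/H$, for fixed $c$ the orbit map $g\mapsto g\cdot\uf(c)$ is a smooth surjection onto $G/H$, so the fibre of $I$ over $c$ is the preimage of $Z\cap G/H$ and has codimension $\geq 2$ in $G$. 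Thus $\dim I\leq\dim G-2+1<\dim G$, the image of $I$ in $G$ lies in a proper closed subset, and its complement intersected with the sets handling $F$ is $V_2$.

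The main obstacle is condition (3): the Kleiman--Bertini transversality theorem is not directly available on $\ulX$, since $G$ acts transitively only on the open orbit $G/H$, not on $\ulX$. I would get around this by first confining all intersection points of the translated curve with the colors to $G/H$, using the codimension count of condition (2) again: the closed sets $D_i\cap(\ulX\setminus G/H)$, $D_i\cap D_j$ for $j\neq i$, and the singular locus of the divisor $D_i\cap G/H$ inside $G/H$ all have codimension $\geq 2$ in $\ulX$, so there is a nonempty open $V_3'$ over which $g\cdot\uf(\uC)$ avoids their union, and for such $g$ every point of $g\cdot\uf(\uC)\cap D_i$ lies in the smooth locus of $D_i\cap G/H$ and on no other color. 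Then I would apply the Kleiman--Bertini theorem on the smooth homogeneous space $G/H$, in characteristic zero, to the morphism $\uf^{-1}(G/H)\to G/H$ from the smooth curve $\uf^{-1}(G/H)$ and to the smooth locally closed subvariety $(D_i\cap G/H)^{\mathrm{sm}}\setminus\bigcup_{j\neq i}D_j$: for $g$ in a nonempty open $V_3^{(i)}$ the two meet transversally, forcing $\uf^{-1}(g^{-1}D_i)$ to be a reduced finite subscheme. Put $V_3:=V_3'\cap V_3^{(1)}\cap\cdots\cap V_3^{(l)}$. Then for $g\in V_3\cap V_4$ the map $\uf$ is transverse to every $D_i$, the intersection points are pairwise distinct (each on a single color, with multiplicity one) and disjoint from the marked points, whose images lie in $\ulX^-=\ulX\setminus(D_1\cup\cdots\cup D_l)$ by (4). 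Taking $V:=V_1\cap V_2\cap V_3\cap V_4$ finishes the argument; the only subtle point is the failure of transitivity on $\ulX$, resolved by pushing the intersection locus into the open orbit before invoking classical transversality there.
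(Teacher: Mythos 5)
Your proof is correct and follows essentially the same route as the paper: generic $G$-translation (Kleiman--Bertini) applied on the homogeneous open orbit $G/H$ handles conditions (2) and (3) for the part of the curve meeting $G/H$, while the finitely many boundary points (images of contact markings) are dealt with separately to get condition (4). Your incidence-variety dimension counts and the orbit-closure-contains-$\uY$ argument are just more explicit versions of the transversality and codimension statements the paper invokes directly.
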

\begin{proof}
First consider the open curve $C^{\circ} = C \setminus \{q_j\}$ by removing markings with non-trivial contact orders. Then the restriction $f|_{C^{\circ}}$ factors through $G/H \subset \X$. By Kleiman-Bertini Theorem \cite[Theorem 10.8]{Hartshorne}, there is an open dense $V \subset G$, such that for any $g \in V$ the restriction $g\circ f|_{C^{\circ}}$ satisfies the conditions (2) and (3) in Notation \ref{not:good-map}.

We notice that for each contact marking $q_j$, its image lies in a $G$-orbit, say $O_j$. Since the complement of $\ulX^{-}\cap O_j$ in $O_j$ is of codimension greater or equal than one. Applying Kleiman-Bertini Theorem again,  condition (4) can be achieved for contact markings by further shrinking $V$.
\end{proof}

Consider the following condition
\begin{equation}\label{equ:enough-point}
n + \sum k_i \geq 4.
\end{equation}
Under this assumption, the map $\tF$ in Notation \ref{not:limit} is stable. Consider the quasi-projective variety 
\begin{equation}\label{equ:rigidified-handle-moduli}
M = M_{0, n + \sum_{i}k_i}
\end{equation} 
parameterizing $( n + \sum_{i}k_i)$-distinct points over a smooth genus zero curves. The markings are labeled by
\[
p_{1}, \cdots, p_{n}; x_{1,1}, \cdots, x_{1,k_1}; \cdots ; x_{l,1}, \cdots, x_{l,k_l}.
\]

Let $\fS_{k_i}$ be the symmetric group acting on $M$ by permuting the markings $x_{i,1}, \cdots, x_{i, k_i}$. Denote by $B = M / \fS$ the quotient with $\fS$ given by the product
\begin{equation}\label{equ:permuting-marking}
\fS = \fS_{k_1}\times \cdots \times \fS_{k_l}.
\end{equation}

As observed in \cite[Section 4]{KP}, when $\sum_i k_i \geq 3$ the quotient $B$ is birational to the quotient
\[
\PP(\sym^{k_1}V^*)\times \cdots \times \PP(\sym^{k_l}V^*) // \mathbf{PGL}(V),
\]
where the latter is rational by \cite{Bog, Ka}.

\begin{proposition}\label{prop:attractor}
Assume $\fM_{\Gamma}(\X) \neq \emptyset$ and (\ref{equ:enough-point}). Under Assumption \ref{assu:good-curve}, there is a unique locally closed embedding $\cF: B \to \fM_{\Gamma}(\X)$ which sends a marked genus zero curve $\uC$ to the stable log map $f_0$ over the underlying stable map $\tF$ as in Notation \ref{not:limit} with handle $\uC$.
\end{proposition}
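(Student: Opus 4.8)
The plan is to first produce $\cF$ as a morphism out of the variety $M$ of \eqref{equ:rigidified-handle-moduli}, then descend it across the quotient by $\fS$, and finally recognize its image as a locally closed substack on which $\cF$ restricts to an isomorphism. For the construction I would work over the universal curve $\pi\colon\cC\to M$ with its tautological sections $p_1,\dots,p_n,x_{1,1},\dots,x_{l,k_l}$: glue in, along each section $x_{i,j}$, a trivial family $M\times\PP^1_{i,j}$ at the point $0$, obtaining a flat family $\cC_1\to M$ of genus zero nodal curves, and let $\tF\colon\cC_1\to\ulX$ be the family contracting $\cC$ to $x^-$ and carrying each $\PP^1_{i,j}$ by a fixed isomorphism onto $P_i$ with $0\mapsto x^-$ (the choice is immaterial, since the parametrization of a teeth is forgotten in the moduli stack). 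By \eqref{equ:enough-point} this is a family of stable maps with discrete data $\uGamma$, hence a morphism $M\to\fM_{\uGamma}(\ulX)$. Over every geometric point the combinatorial type of $\tF$ is the constant marked graph $\Phi$ of Construction \ref{cons:monoid}, so by Lemma \ref{lem:min-monoid} the minimal characteristic monoid is the constant sharp monoid $\ocM(\Phi)$; since the relative $\pic^{0}$ of the genus zero family $\cC_1/M$ is trivial, the construction \eqref{equ:log-curve} and the torsor-gluing argument from the proof of Lemma \ref{lem:exist-log-lift} go through verbatim over $M$ and produce a lift $M\to\fM_{\Gamma}(\X)$. Relabelling the sections $x_{i,1},\dots,x_{i,k_i}$ for a fixed $i$ only permutes isomorphic teeth of $\cC_1$, leaving the induced point of $\fM_{\Gamma}(\X)$ unchanged; the morphism is therefore invariant under $\fS$ as in \eqref{equ:permuting-marking} and descends to $\cF\colon B\to\fM_{\Gamma}(\X)$. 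Its uniqueness is forced, since the value of any such $\cF$ at a point must be the unique log lift $f_0$ of the corresponding $\tF$ from Lemma \ref{lem:exist-log-lift}, and $\fM_{\Gamma}(\X)$ is separated with $B$ reduced.

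Next I would verify that $\cF$ is a monomorphism of stacks. Given a $T$-valued stable log map in the image of $\cF$, the handle is recovered as the unique component of the source contracted to the point $x^-$; the node supporting a teeth is then a point of the handle, and the index $i$ of that teeth is read off from its image curve $P_i$, which meets only the color $D_i$ by Theorem \ref{prop:curve-class}. Hence the handle equipped with the markings $p_1,\dots,p_n$ and the unordered tuples of attaching nodes for each $i$ --- that is, the $T$-point of $B$ --- is reconstructed functorially, so $\cF$ is a monomorphism.

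Finally, let $\fM^{\circ}\subset\fM_{\Gamma}(\X)$ be the substack of stable log maps whose underlying stable map is of the form $\tF$ of Notation \ref{not:limit}; this is locally closed, being a union of strata of fixed combinatorial type, and $\cF$ factors through it by construction. The reconstruction of the previous paragraph defines a morphism $\fM^{\circ}\to B$ which is inverse to $\cF$: it is inverse on geometric points by Lemma \ref{lem:exist-log-lift}, and because the characteristic monoid over $\fM^{\circ}$ is the constant sharp monoid $\ocM(\Phi)$ the minimal log structure there carries no further moduli, so the identification is scheme-theoretic --- here one uses that $\fM^{\circ}$ lies in the $\Gm$-fixed locus, along which $\fM_{\Gamma}(\X)$ is log smooth and hence reduced. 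Thus $\cF$ is an isomorphism of $B$ onto the locally closed substack $\fM^{\circ}$, which is the asserted locally closed embedding. The genuinely new input beyond Lemma \ref{lem:exist-log-lift}, and the main obstacle, is the passage from geometric points to families: performing the log lifting over all of $M$ in the first step, and upgrading the set-theoretic bijection $\fM^{\circ}\leftrightarrow B$ to a scheme-theoretic isomorphism in the last; both rest on the constancy of the minimal monoid $\ocM(\Phi)$ and the vanishing of the relative $\pic^{0}$ in genus zero.
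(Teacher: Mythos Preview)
Your approach differs from the paper's: you build $\cF$ explicitly by constructing the universal family over $M$, lifting to log, and descending, whereas the paper works in the opposite direction. It defines the tautological ``forget-to-handle'' morphism $\cG\colon\fM_{\lambda,\infty}\to B$ out of a suitably identified torus-fixed stratum, shows $\cG$ is bijective on closed points by Lemma~\ref{lem:exist-log-lift}, checks representability over the automorphism-free locus via \cite[Corollary 3.13]{AC} and matches automorphisms at the stacky points, and concludes that $\cG$ is an isomorphism between smooth stacks; $\cF$ is then its inverse. The paper's route never constructs the log lift in families, trading your concreteness for economy.

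There is a gap in your last step. The sentence ``$\fM^{\circ}$ lies in the $\Gm$-fixed locus, along which $\fM_{\Gamma}(\X)$ is log smooth and hence reduced'' does not justify what you need: log smoothness of the ambient stack says nothing about reducedness of a locally closed substack, and the $\Gm$-fixed locus of a merely log smooth (toroidal) space need not be smooth or reduced. To upgrade the point-bijection $\fM^{\circ}\leftrightarrow B$ to an isomorphism you must know that $\fM^{\circ}$ itself is smooth (or at least reduced, with the automorphism groups matching functorially). The paper supplies exactly this: it places $\fM^{\circ}$ inside the log stratum $\fM(\Phi)$ along which the characteristic sheaf is constantly $\ocM(\Phi)$, invokes \cite[Lemma 3.5(ii)]{LogStack} to see that such strata of a log smooth stack are smooth in the ordinary sense, and then uses that the $\Gm$-fixed locus of a \emph{smooth} stack is smooth. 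With that input your argument completes. A smaller point: descent of your family from $M$ to the \emph{stack} quotient $B=[M/\fS]$ requires $\fS$-equivariance data (2-isomorphisms $F\circ\sigma\Rightarrow F$ satisfying the cocycle condition), not merely invariance on isomorphism classes; the data are obvious here---permutations of identical teeth---but should be named, since this is precisely what identifies the stacky automorphisms of $B$ with those of the log maps.
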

\begin{proof}
Let $\fM(\Phi) \subset \fM_{\Gamma}(\X)$ be locally closed substack consisting of stable log maps with the marked graph $\Phi$ as in Construction \ref{cons:monoid}. Consider the stratification of $\fM_{\Gamma}(\X)$ with respect to its log structure as in \cite[Lemma 3.5 (ii)]{LogStack}. Then we observe that $\fM(\Phi)$ is a stratum of $\fM_{\Gamma}(\X)$ such that the characteristic sheaf $\ocM_{\fM_{\Gamma}(\X)}$ is locally constant along $\fM(\Phi)$. Since $\fM_{\Gamma}(\X)$ is log smooth, the stratum $\fM(\Phi)$ has smooth underlying structure. 

Let $\fM_{\lambda,\infty} \subset \fM(\Phi)$ be the torus fixed locus consisting of the limits of general stable log maps as in Notation \ref{not:good-map} with respect to the chosen the torus action. Since $\fM(\Phi)$ has smooth underlying structure, $\fM_{\lambda,\infty}$ also has smooth underlying structure. Lemma \ref{lem:exist-log-lift} implies the tautological morphism 
\[\cG: \fM_{\lambda,\infty} \to B\] 
between two smooth stacks are one-to-one on the level of closed points.

We then observe that $\fM_{\lambda,\infty}$ is irreducible. Indeed, since $\fM_{\Gamma}(\X)$ is of finite type, and $\fM_{\lambda,\infty}$ is smooth, there is a unique irreducible component $\fM'_{\lambda,\infty} \subset \fM_{\lambda,\infty}$ such that $\cG|_{\fM'_{\lambda,\infty}}$ is dominant. Assume we have an object $[f] \in \fM_{\lambda,\infty}\setminus \fM_{\lambda,\infty}'$ given by a closed point. Since $B$ is irreducible, and $\cG$ is one-to-one on the level of closed points, there is an object $f_{S}$ over $S = \spec R$ where $R$ is a DVR, $\eta \in S$ is the generic point, and $s \in S$ is the closed point, such that $f_{\eta}$ is an object in $\fM_{\lambda,\infty}'$ and the central fiber $f_s$ has the underlying map given by that of $f$. It follows from Lemma \ref{lem:exist-log-lift} that $f_s = f$. This implies that $\fM'_{\lambda,\infty} = \fM_{\lambda,\infty}$.

Denote by $M \subset \fM_{\lambda,\infty}$ the fiber over the automorphism free locus of $B$. Then the restriction $\cG|_{M}$ is representable by the representability of \cite[Corollary 3.13]{AC}, since in this case the underlying stable maps of the log maps in $\fM_{\lambda,\infty}$ is automorphism free.

The cases when $B$ has stacky locus only occur when the equality in (\ref{equ:enough-point}) holds. Since $B$ parametrizes the underlying stable maps of the the log maps in $\fM_{\lambda,\infty}$, the stackyness of $B$ corresponding to the automorphism of the underlying stable maps. Note that this automorphism comes from subgroups of (\ref{equ:permuting-marking}), hence perserves the discrete data of $\Phi$. Thus we could lift the automorphism of the underlying stable maps to the corresponding log stable maps.  One could also see the existence of such lifting of automorphisms from the quotient construction using the rigidification (\ref{equ:rigidify-marking}) in the next section. Thus the morphism $\cG$ induces an isomorphism of the automorphism groups. This implies that $\cG$ is an isomorphism, whose inverse is the embedding $\cF$ as in the statement.
\end{proof}

\begin{proposition}\label{prop:irreducibility}
Under Assumption \ref{assu:good-curve} for the curve class $\beta$, the moduli space $\fM_{\Gamma}(\X)$ is irreducible.
\end{proposition}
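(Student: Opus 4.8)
The plan is to exploit the $\Gm$-action on $\fM_\Gamma(\X)$ fixed in Section~\ref{ss:BB} and show that a single irreducible component fills up the whole space. By Proposition~\ref{prop:attractor} the torus-fixed substack $\fM_{\lambda,\infty}$ is isomorphic to $B$, hence irreducible; in the remaining low-degree cases $n+\sum_i k_i\le 3$, where $\tF$ need not be stable, $\fM_{\lambda,\infty}$ is again irreducible by the explicit description in Proposition~\ref{prop:limit}. Let $Z_0\subseteq\fM_\Gamma(\X)$ be the unique irreducible component containing $\fM_{\lambda,\infty}$. Since $\fM_\Gamma(\X)$ is proper and $G$ is connected, the $G$-action preserves every irreducible component; in particular $Z_0$ is $G$-stable, and it suffices to prove that an arbitrary irreducible component $Z$ coincides with $Z_0$.

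The first step is to produce inside $Z$ a point of the open locus $\fM^\circ\subseteq\fM_\Gamma(\X)$ of stable log maps $[f]$ with $\uC\cong\PP^1$ and $f(C)\cap G/H\ne\emptyset$. As $\fM_\Gamma(\X)$ is proper, $Z$ contains a $\Gm$-fixed point $p$; by hypothesis $\fM_\Gamma(\X)$ is log smooth, hence normal, in a connected neighborhood $\cU$ of $p$, so $\cU$ is irreducible and $Z$ is the unique component through $p$. A general closed point $q\in\cU$ lies in the open stratum of the stratification of $\fM_\Gamma(\X)$ by its characteristic sheaf: log smoothness at $p$ makes all deformations unobstructed, so all nodes of the source get smoothed and all tangencies with the boundary divisor other than the prescribed ones at the contact markings get removed. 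Thus $q$ has smooth irreducible source, meets $G/H$, and lies in $\cU\subseteq Z$; that is, $q\in Z\cap\fM^\circ$. I expect this to be the point requiring the most care, since it rests on the precise interplay between the log structure of $\fM_\Gamma(\X)$ and the deformations of the source curve; alternatively one can feed in the non-degenerate stable log maps exhibited in the nonemptyness analysis of \cite{A1}.

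Finally I would degenerate $q$. By Lemma~\ref{lem:group-move} there is $g\in G$ with $\underline{g\circ q}$ as in Notation~\ref{not:good-map}. By Lemma~\ref{lem:adjoint-type-limit} and Proposition~\ref{prop:limit} the underlying stable map of $\lim_{t\to\infty}\lambda(t)\cdot(g\circ q)$ is the stabilization of $\tF$, and by Lemma~\ref{lem:exist-log-lift} together with the properness of $\fM_\Gamma(\X)$ this limit is the unique lift with discrete data $\Gamma$, hence a point of $\fM_{\lambda,\infty}\subseteq Z_0$. The closure of the $\Gm$-orbit of $g\circ q$ is irreducible and contains both $g\circ q$ and this limit, so $g\circ q\in Z_0$; since $G$ is connected it preserves $Z_0$, whence $q=g^{-1}\cdot(g\circ q)\in Z_0$ as well. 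Therefore $q\in Z\cap Z_0$, and as $q$ lies in the normal locus of $\fM_\Gamma(\X)$ it belongs to a unique component, forcing $Z=Z_0$. Since $Z$ was an arbitrary irreducible component, $\fM_\Gamma(\X)=Z_0$ is irreducible.
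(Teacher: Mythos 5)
Your proposal is correct and follows essentially the same route as the paper: log smoothness (hence normality) reduces the question to connecting every component to the irreducible torus-fixed locus, the density of the trivial-log-structure locus supplies a point with irreducible source meeting $G/H$, Lemma \ref{lem:group-move} puts it in general position, and the $\Gm$-flow lands it in $\cF(B)$ (resp.\ the single fixed point when $n+\sum_i k_i\le 3$). The only difference is presentational — you argue component by component where the paper phrases it as connectedness — and your intermediate detour through a $\Gm$-fixed point $p$ is unnecessary given that normality holds everywhere.
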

\begin{proof}
Since $\fM_{\Gamma}(\X)$ is log smooth, the irreducibility of $\fM_{\Gamma}(\X)$ is equivalent to the connectedness. 

Notice that the open substack $\fM^{\circ}_{\Gamma}(\X) \subset \fM_{\Gamma}(\X)$ consisting of the points with trivial log structure is dense in $\fM_{\Gamma}(\X)$. Thus, by Lemma \ref{lem:group-move}, any stable log map $[f'] \in \fM_{\Gamma}(\X)$ deforms to a stable log map $[f]$ satisfying the conditions in Notation \ref{not:good-map}. When $n + \sum_{i}k_i \leq 3$, the log map $[f]$ flows into a unique (possibly stacky) point under the $\GG_m$-action. When $n+ \sum_i k_i \geq 4$, the log map $[f]$ flows into a point in the connected locus $\cF(B) \subset \fM_{\Gamma}(\X)$ by Proposition \ref{prop:attractor}. This proves the irreducibility. 
\end{proof}

\subsection{The unirationality}

Denote by
$\fM_{\lambda,\infty} \subset \fM_{\Gamma}(\X)$
the locally closed substack with underlying stable maps given by Proposition \ref{prop:limit}. Then $\fM_{\lambda,\infty}$ is in the torus fixed locus. Consider the open substack
\[
\fM_{\lambda} := \{f \in \fM_{\Gamma}(\X) \ | \ \lim_{t\mapsto \infty}t[f] \in \fM_{\lambda,\infty}\}.
\]
Then $\fM_{\lambda}$ is an irreducible log smooth stack. This means that the underlying stack of $\fM_{\lambda}$ has possibly toric singularities. To analyze the toric singularities, we first replace $\fM_{\lambda}$ by an \'etale cover to remove the monodromy of the log structure along the fixed locus $\fM_{\lambda,\infty}$ as follows.

Consider the discrete data $\Gamma'$ obtained by adding extra markings 
\begin{equation}\label{equ:rigidify-marking}
x_{1,1},\cdots,x_{1,k_1}; \cdots ; x_{l,1}, \cdots, x_{l,k_{l}}
\end{equation}
to $\Gamma$ with trivial contact orders. Denote by $\fM_{\Gamma'}(\X)_{0} \subset \fM_{\Gamma'}(\X)$ the dense open substack of automorphism free locus. We consider the locally closed substack
\begin{equation}\label{equ:rigidify-stack}
\fM' \subset \fM_{\Gamma'}(\X)_{0}
\end{equation}
such that any maps $f$ in $\fM'$ intersects $D_i$ transversally at the markings 
\[
x_{i,1},\cdots,x_{i,k_i}.
\]
We observe that $\fM'$ is log smooth, and the stack quotient $[\fM'/\fS]$ is birational to $\fM_{\lambda}$, where the symmetry group $\fS$ acts by permuting the markings (\ref{equ:rigidify-marking}) as in (\ref{equ:permuting-marking}). 

Consider the torus action on $\fM'$ induced by $\lambda$. Denote by 
\[\fM'_{\lambda,\infty} \subset \fM'\]
the closed substack consisting of underlying stable maps as in  Proposition \ref{prop:limit} with the extra markings (\ref{equ:rigidify-marking}) given by the intersection points with $D_i$ for all $i$. It is not hard to check that $\fM'_{\lambda,\infty}$ is an open substack of a component of the fixed loci of $\fM'$ under the torus action $\lambda$. A similar proof as in the case of $\fM_{\lambda,\infty}$ shows that

\begin{lemma}\label{lem:rigidified-fix-locus}
When (\ref{equ:enough-point}) holds, the underlying structure of $\fM'_{\lambda,\infty}$ is given by $M$ as in (\ref{equ:rigidified-handle-moduli}). Otherwise, $\fM'_{\lambda,\infty}$ is a single non-stacky point.
\end{lemma}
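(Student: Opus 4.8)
The plan is to mirror the argument already used for $\fM_{\lambda,\infty}$ in Proposition \ref{prop:attractor}, transporting it through the rigidification that adds the markings \eqref{equ:rigidify-marking}. First I would observe that, by construction, $\fM'_{\lambda,\infty}$ parametrizes stable log maps whose underlying map is the stabilization of $\tF$ in Notation \ref{not:limit}, decorated with the extra markings $x_{i,1},\dots,x_{i,k_i}$ sitting at the intersection points of the teeth $\PP_{i,j}$ with $D_i$; under the assumption \eqref{equ:enough-point} the map $\tF$ is already stable, so no contraction occurs and the underlying data is exactly a point of $M = M_{0,n+\sum_i k_i}$ together with the gluing data of the teeth. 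I would make precise the tautological morphism $\cG': \fM'_{\lambda,\infty} \to M$ recording the handle $\uC$ with its marked points, exactly as $\cG$ was defined, and note that $\fM'$ sits inside the automorphism-free locus $\fM_{\Gamma'}(\X)_0$, so all the representability subtleties of Proposition \ref{prop:attractor} disappear here.

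Next I would run the same three-part analysis. (1) By Lemma \ref{lem:exist-log-lift} applied to the discrete data $\Gamma'$ (which still satisfies Assumption \ref{assu:good-curve}, since adding markings with trivial contact order does not change the curve class), over each point of $M$ there is a unique stable log map in $\fM'_{\lambda,\infty}$; hence $\cG'$ is a bijection on closed points. (2) Smoothness: $\fM'$ is log smooth, and $\fM'_{\lambda,\infty}$ is an open substack of a component of the torus fixed locus, hence its underlying stack is smooth, so $\cG'$ is a morphism of smooth stacks that is bijective on closed points. (3) The same irreducibility-by-specialization argument as in Proposition \ref{prop:attractor} — using $M$ irreducible, Lemma \ref{lem:exist-log-lift}, and a DVR limit argument — shows $\fM'_{\lambda,\infty}$ is irreducible and that $\cG'$ is dominant from its unique dominant component, which must then be everything. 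Combining with representability (automatic here because everything is automorphism free), $\cG'$ is an isomorphism, so the underlying structure of $\fM'_{\lambda,\infty}$ is $M$. When \eqref{equ:enough-point} fails, the stabilization of $\tF$ collapses to one of the degenerate configurations in Proposition \ref{prop:limit}(2)–(3): a single $\PP^1 \to P_i$ with at most one marking, or $\PP^1\cup\PP^1 \to P_i\cup P_j$ with no markings. In either case the rigidified moduli problem has no continuous parameters — the source curve and all marked/intersection points are rigid — and Lemma \ref{lem:exist-log-lift} gives a unique log lift, so $\fM'_{\lambda,\infty}$ is a single point; the automorphism-free condition defining $\fM'$ forces it to be non-stacky.

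The main obstacle I expect is checking that $\fM'_{\lambda,\infty}$ really is (an open substack of a component of) the torus fixed locus with the \emph{expected} underlying structure, i.e. that the rigidifying markings are genuinely pinned to the points $\PP_{i,j}\cap D_i$ for every map in the fixed locus and that no extra boundary strata of $\fM_{\Gamma'}(\X)$ intrude — this is the place where one must invoke the specialization analysis of Section \ref{ss:underlying-deg} (Proposition \ref{prop:limit} and Lemma \ref{lem:adjoint-type-limit}) to control which underlying stable maps can appear as $\lambda$-limits, together with Assumption \ref{assu:good-curve} and Lemma \ref{lem:curve-in-center} to guarantee the teeth land in $\uY$ so that the log lift is governed by Construction \ref{cons:monoid}. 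Once that identification is in place, everything else is the routine repetition of Proposition \ref{prop:attractor} in the simpler automorphism-free setting, which is why the proof can be abbreviated to ``a similar proof as in the case of $\fM_{\lambda,\infty}$''.
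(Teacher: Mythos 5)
Your proposal follows the paper's intended route exactly: the paper's proof of this lemma is literally ``a similar proof as in the case of $\fM_{\lambda,\infty}$'', i.e.\ a rerun of Proposition \ref{prop:attractor} with $B$ replaced by $M$ because the rigidifying markings kill the $\fS$-action, and that is precisely what you carry out. Two small corrections: smoothness of $\fM'_{\lambda,\infty}$ should be deduced, as in Proposition \ref{prop:attractor}, from its being a stratum of the log smooth stack $\fM'$ along which the characteristic sheaf is locally constant --- not from its being a torus fixed locus, since the underlying stack of $\fM'$ may have toric singularities and fixed loci of singular varieties need not be smooth; and when $n+\sum_i k_i = 3$ the map $\tF$ is still stable (the contracted handle carries three special points), so the ``single non-stacky point'' case also covers Proposition \ref{prop:limit}(1) with $M_{0,3}$ a point, not only the degenerate configurations (2)--(3).
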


Note that we have an \'etale strict morphism
\begin{equation}\label{equ:forget-marking}
\fM'_{\lambda,\infty} \to \fM_{\lambda,\infty}
\end{equation}
by forgetting the extra marking (\ref{equ:rigidify-marking}). Furthermore, this morphism removes the monodromy of the log structure as follows:

\begin{lemma}\label{lem:zar-log}
The minimal log structure on $\fM'_{\lambda,\infty}$ is defined over Zariski site.
\end{lemma}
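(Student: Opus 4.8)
The plan is to carry the explicit construction of the minimal log structure from the proofs of Proposition \ref{prop:unique-log-lift} and Lemma \ref{lem:exist-log-lift} across the whole fixed locus $\fM'_{\lambda,\infty}$, and to check that passing to the rigidified cover has removed the only possible source of \'etale monodromy. If (\ref{equ:enough-point}) fails then $\fM'_{\lambda,\infty}$ is a single non-stacky point by Lemma \ref{lem:rigidified-fix-locus} and there is nothing to prove, so assume (\ref{equ:enough-point}). Then the underlying scheme of $\fM'_{\lambda,\infty}$ is the variety $M$ of (\ref{equ:rigidified-handle-moduli}), and by Proposition \ref{prop:limit} the universal underlying stable map over it is everywhere equal to $\tF$ of Notation \ref{not:limit}, with the markings $p_1,\dots,p_n$ on the handle and the extra marking $x_{i,j}$ of (\ref{equ:rigidify-marking}) on the tooth $\PP_{i,j}$ (where it specializes, since $f(x_{i,j})\in D_i$ flows to $x_i^-$). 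In particular the marked graph $\Phi$ of Construction \ref{cons:monoid} is constant along $\fM'_{\lambda,\infty}$, and by Lemma \ref{lem:min-monoid} the minimal characteristic monoid is the fixed sharp monoid $\ocM(\Phi)$, with the canonical splitting $\ocM(\Phi)^{gp}=\ocM^{gp}\oplus\bigoplus_e\NN_e^{gp}$: the summand $\ocM^{gp}$ is the group of a globally constant sheaf, pulled back via the chart $\gamma$ of (\ref{equ:chart}), and the summands $\NN_e$ are indexed by the nodes $e$ of the source curve, equivalently by the teeth $\PP_{i,j}$.

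The heart of the matter is to rule out monodromy. Carrying the pushout (\ref{equ:log-curve}) out in families over $\fM'_{\lambda,\infty}$ presents its minimal log structure as $\ocM(\Phi)\oplus_{\bigoplus_e\NN_e}\cM_{(\fM'_{\lambda,\infty})^{\sharp}}$, where $\cM_{(\fM'_{\lambda,\infty})^{\sharp}}$ is the log structure making the universal underlying curve a canonical log curve; thus the only obstruction to everything in sight being Zariski is the monodromy of the universal curve over $M$, which acts through automorphisms of the decorated graph $\Phi$, i.e.\ through permutations of its vertices and edges. But the components of $\tF$ map to \emph{distinct} closed subsets of $\ulX$ (the handle to $x^-$, the teeth in the $i$-th bunch to $P_i$), so distinct bunches of teeth cannot be exchanged; and within a bunch the teeth are in bijection with the \emph{labeled} markings $x_{i,j}$, which are disjoint labeled sections of the universal curve and hence carry no monodromy, while each node $e$ is labeled by the tooth it meets. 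Therefore the monodromy acts trivially on $\Phi$, so $\ocM(\Phi)$ is a constant (in particular Zariski) sheaf on $\fM'_{\lambda,\infty}$, the canonical log curve log structure $\cM_{(\fM'_{\lambda,\infty})^{\sharp}}$ has no monodromy at its nodes and is therefore Zariski, and the family version of (\ref{equ:log-curve}) is consequently built from Zariski data. Hence the minimal log structure on $\fM'_{\lambda,\infty}$ is defined over the Zariski site.

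The one genuine obstacle, and the point where the rigidified cover is indispensable, is exactly this monodromy step. On the un-rigidified fixed locus $\fM_{\lambda,\infty}$ the group $\fS=\fS_{k_1}\times\cdots\times\fS_{k_l}$ of (\ref{equ:permuting-marking}) acts by permuting the teeth in each bunch, hence the nodes, hence the $\NN_e$-summands of $\ocM(\Phi)^{gp}$, which produces genuine \'etale monodromy in the log structure; introducing the labeled markings $x_{i,j}$ rigidifies the teeth and kills this action. Everything else reduces to the routine verification that the family version of the construction (\ref{equ:log-curve}), namely pushing out a constant sheaf of monoids against the (now Zariski) log structure of the canonical log curve, can be carried out over the Zariski site once all the combinatorial data are monodromy-free.
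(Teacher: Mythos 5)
Your proof is correct and follows essentially the same route as the paper: both arguments reduce to showing that the minimal characteristic sheaf $\ocM(\Phi)$ is globally constant, using that its $\ocM$-part comes from the globally constant sheaf of (\ref{equ:chart}) and that the labeled markings $x_{i,j}$ rigidify the teeth, hence kill any monodromy permuting the $\NN_e$-summands. The paper packages the monodromy-vanishing step as an explicit isomorphism of sheaves of groups (the composition (\ref{equ:projection}) out of the globally constant sheaf $f_S^*\ocM_{\X}^{gp}|_{\sigma}\oplus\ocM_{S^{\sharp}}^{gp}$), which is exactly the justification needed for your assertion that the monodromy can only act through automorphisms of the decorated graph $\Phi$.
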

\begin{proof}
Since any log structure on a geometric point is Zariski, it suffices to prove the statement under the assumption (\ref{equ:enough-point}). For simplicity, we write $S = \fM'_{\lambda,\infty}$.

By Lemma \ref{lem:min-monoid} and the strictness of (\ref{equ:forget-marking}), the sheaf of groups $\ocM_{S}^{gp}$ is a locally constant sheaf on $S$. To proof the statement, it suffices to verify that $\ocM_{S}^{gp}$, hence $\ocM_{S}$ is globally constant. Denote by $f_{S}: \cC_{S}/S \to \X$ the universal family of stable log maps over $S$. Let $\sigma \subset \cC_S$ be the section over $S$ given by a fixed special point on the handle of each fiber. Note that $\sigma$ could be either node or marking. We next assume $\sigma$ is a node. The case $\sigma$ is a marking can be proved similarly.

Let $\cC_S^{\sharp} \to S^{\sharp}$ the canonical log structure associated to the underlying family of $\cC_S \to S$. By Lemma \ref{lem:rigidified-fix-locus}, both $\cM_{\cC_S^{\sharp}}$ and $\cM_{S^{\sharp}}$ are Zariski. In fact, $\ocM_{S^{\sharp}} = \sum_{e}\NN_e$ is a globally constant sheaf given by the product of the canonical log structure smoothing each node of the underlying family $\underline{\cC}_S \to \uS$ by \cite{LogSS}, Hence $\ocM_{\cC_S^{\sharp}}|_{\sigma}$ is a globally constant sheaf of monoids of the form:
\[
\ocM_{\cC_S^{\sharp}}|_{\sigma} = \sum_{e \neq \sigma}\NN_e \oplus \NN^2.
\]
Then we have
\begin{equation}\label{equ:curve-monoid}
\ocM_{\cC_S}|_{\sigma} = \ocM_{S}\oplus_{\NN_{\sigma}}\NN^2.
\end{equation}
Now the morphism of sheaves of groups:
\[
(\bar{f}_{S}^{\flat})^{gp}|_{\sigma}: f_S^*\ocM_{\X}^{gp}|_{\sigma} \to \ocM_{\cC_S}^{gp}|_{\sigma}
\]
is given by $(\bar{f}_{S}^{\flat})^{gp}|_{\sigma}(\delta) = a_{\delta} + b_{\delta}$ where $a_{\delta} \in \ocM_{S}$ and $b_{\delta} \in \NN^2$ such that $b_{\delta}$ is a globally constant section of the form $(b,0)$ or $(0,b)$ in $\NN^2$. Consider the composition
\begin{equation}\label{equ:projection}
\xymatrix{
f_S^*\ocM_{\X}^{gp}|_{\sigma}\oplus \ocM_{S^{\sharp}}^{gp} \ar[r] & \ocM_{\cC_S}^{gp}|_{\sigma}  \ar[r] & \ocM_{S}^{gp}
}
\end{equation}
where the second arrow is given by the projection induced by $a_{\delta} + b_{\delta} \mapsto a_{\delta}$. Using Lemma \ref{lem:min-monoid}(2) and (\ref{equ:edge-relation}), we verify that (\ref{equ:projection}) is an isomorphism over each fiber, hence is an isomorphism of sheaves of groups. On the other hand, since both $\ocM_{\X}^{gp}$ and $\ocM_{S^{\sharp}}^{gp}$ are globally constant, this implies that $\ocM_{S}^{gp}$ is also globally constant. This finishes the proof.
\end{proof}

Choose the open substack of $\fM'$:
\[
\fM'_{\lambda} := \{f \in \fM' \ | \ \lim_{t\mapsto \infty}t[f] \in \fM'_{\lambda,\infty}\}.
\]
Since $\fM'_{\lambda}$ is an irreducible, log smooth variety, by \cite{Ni, log-bound}, we may take the projective resolution
\begin{equation}\label{equ:resolution}
\phi: \fM^{res} \to \fM'_{\lambda}
\end{equation}
by a sequence of log \'etale blow-ups. 

\begin{lemma}
The resolution $\phi$ can be chosen to be $\Gm$-equivariant.
\end{lemma}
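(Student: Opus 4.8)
The plan is to invoke an equivariant version of the log resolution theorem rather than to re-prove resolution from scratch. First I would recall that $\fM'_\lambda$ carries a $\Gm$-action (via $\lambda$ acting on the target $\X$), and that all the structure in play — the underlying scheme, the fine saturated log structure, and the ideals/centers that a canonical resolution by log \'etale blow-ups uses — is natural in $\fM'_\lambda$ and hence automatically $\Gm$-equivariant. The key point is functoriality: the resolution algorithm of \cite{Ni} (see also \cite{log-bound}) producing $\phi\colon \fM^{res}\to\fM'_\lambda$ is canonical, i.e.\ it commutes with smooth (in particular \'etale) morphisms and with the formation of products with a smooth group. Therefore it commutes with the action morphism $a\colon \Gm\times\fM'_\lambda\to\fM'_\lambda$ and the two projections, and one gets a compatible action on $\fM^{res}$ making $\phi$ equivariant.

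Concretely, the steps I would carry out are: (1) Record that $\Gm$ acts on $\fM'_\lambda$ preserving the log structure; this is immediate since the $\Gm$-action on $\X$ is by log automorphisms and $\fM'_\lambda$ is defined by $\Gm$-stable conditions (the attractor condition $\lim_{t\to\infty}t[f]\in\fM'_{\lambda,\infty}$ is manifestly $\Gm$-invariant). (2) State the functoriality property of the chosen resolution procedure: for any smooth morphism $Z\to\fM'_\lambda$ the pullback of $\phi$ is the canonical resolution of $Z$ with its pulled-back log structure. (3) Apply this to the three maps $a, \pr_2\colon\Gm\times\fM'_\lambda\to\fM'_\lambda$ (both smooth, being a base change of $\Gm\to\Spec\CC$ composed with an action which is an isomorphism fiberwise), obtaining canonical identifications $a^*\fM^{res}\cong \Gm\times\fM^{res}\cong \pr_2^*\fM^{res}$ compatible with $\phi$; this yields a morphism $\tilde a\colon\Gm\times\fM^{res}\to\fM^{res}$ over $a$. (4) Check the cocycle/associativity and unit axioms for $\tilde a$: these follow because the relevant diagrams of smooth morphisms over $\fM'_\lambda$ are compatible with canonical resolution, so the induced diagrams on $\fM^{res}$ commute by uniqueness. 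Hence $\tilde a$ is a genuine $\Gm$-action and $\phi$ is equivariant by construction. Finally, note that $\fM^{res}$ remains projective over $\fM'_\lambda$ and log smooth, as required for later use.

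The main obstacle, or rather the point that needs care, is justifying that the toroidal/log \'etale resolution of \cite{Ni,log-bound} is \emph{canonical} in the sense needed — i.e.\ compatible with \'etale (or smooth) base change and with products by a torus — since the literature statement may be phrased only as an existence result. I would address this by working \'etale-locally: log smoothness means $\fM'_\lambda$ is \'etale-locally modeled on a toric variety (a spectrum of a monoid algebra for a fine saturated monoid), and on such charts the canonical toric resolution (e.g.\ via a canonical subdivision of the associated cone) is manifestly functorial and commutes with the $\Gm$-action coming from the torus. Gluing these local canonical resolutions, using the uniqueness of the canonical subdivision, produces a global $\phi$ that is automatically $\Gm$-equivariant. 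Alternatively, one can simply cite an equivariant toroidal resolution statement directly (such resolutions are standard in the toric/toroidal literature); I would phrase the proof to allow either route, the emphasis being that canonicity upgrades existence to equivariance with no extra work.

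\begin{proof}
The $\Gm$-action on $\fM'_\lambda$ is induced by the $\Gm$-action on the target $\X$ via $\lambda$; since $\lambda$ acts on $\X$ by automorphisms of log schemes, it acts on $\fM_\Gamma(\X)$, on $\fM'$, and on the $\Gm$-stable open substack $\fM'_\lambda$ (the attractor condition is $\Gm$-invariant) by automorphisms of log stacks. Thus $\fM'_\lambda$ is a $\Gm$-equivariant log smooth variety.

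By \cite{Ni} (see also \cite{log-bound}), the resolution $\phi\colon\fM^{res}\to\fM'_\lambda$ by log \'etale blow-ups can be produced by a canonical procedure: working \'etale-locally, $\fM'_\lambda$ is modeled on an affine toric variety $\Spec\CC[P]$ for a fine saturated monoid $P$, and one takes the toric modification associated to a canonical subdivision of the cone $P^\vee_\RR$. This subdivision, and hence the resulting modification, is functorial for \'etale (more generally smooth) morphisms and compatible with the action of any torus acting through the toric structure. Gluing yields $\phi$ with the property that for every smooth morphism $Z\to\fM'_\lambda$, the base change $Z\times_{\fM'_\lambda}\fM^{res}\to Z$ is the canonical resolution of $Z$ with its pulled-back log structure, and in particular $\phi$ is compatible with any group action on $\fM'_\lambda$ by log automorphisms.

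Apply this to the action morphism $a\colon\Gm\times\fM'_\lambda\to\fM'_\lambda$ and the projection $\pr_2\colon\Gm\times\fM'_\lambda\to\fM'_\lambda$, both of which are smooth. Canonicity gives natural isomorphisms
\[
a^*\fM^{res}\ \cong\ \text{(canonical resolution of }\Gm\times\fM'_\lambda)\ \cong\ \pr_2^*\fM^{res}\ \cong\ \Gm\times\fM^{res},
\]
compatible with the structure morphisms to $\Gm\times\fM'_\lambda$. Composing, we obtain a morphism $\tilde a\colon\Gm\times\fM^{res}\to\fM^{res}$ lying over $a$, together with the identity section over the unit of $\Gm$. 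The associativity of $\tilde a$ follows from applying canonicity to the two smooth morphisms $\Gm\times\Gm\times\fM'_\lambda\to\fM'_\lambda$ given by $(s,t,x)\mapsto (st)x$ and $(s,t,x)\mapsto s(tx)$, which agree; by uniqueness of the canonical resolution the induced maps on $\fM^{res}$ agree as well. The unit axiom is checked the same way. Hence $\tilde a$ defines a $\Gm$-action on $\fM^{res}$, and $\phi$ is $\Gm$-equivariant. The morphism $\phi$ remains projective and $\fM^{res}$ log smooth, as these properties are preserved by the construction.
\end{proof}
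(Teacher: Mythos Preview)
Your argument is correct and rests on the same idea as the paper's: the resolution procedure of \cite{Ni,log-bound} is canonical, and since the $\Gm$-action preserves the log structure of $\fM'_\lambda$, canonicity forces equivariance. The paper phrases this more concretely---it describes the resolution as a barycentric subdivision (canonical, hence $\Gm$-equivariant because the log structure is torus-stable) followed by Zariski-local toric desingularization via \cite[Lemma~4.4.1]{log-bound}---whereas you package the same content as an abstract functoriality statement and then run the standard ``pull back along $a$ and $\pr_2$'' argument to transport the action. Your route has the virtue of making explicit why canonicity yields equivariance (the cocycle check via uniqueness is nice), while the paper's version is shorter and pins down exactly which subdivisions are used, which matters later when the proof of Lemma~\ref{lem:fix-loci-rational} analyzes the fibers over $\fM'_{\lambda,\infty}$ as toric varieties coming from these specific blow-ups.
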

\begin{proof}
This can be seen from the construction of \cite{log-bound}. In fact, we may construct $\phi$ by first taking a barycentric subdivision as in \cite[Section 4.3]{log-bound}, which is $\Gm$-equivariant since the log structure of $\fM'_{\lambda}$ is stable under the torus action. Then by \cite[Lemma 4.4.1]{log-bound}, we may further construct $\phi$ by resolving the toric singularities Zariski locally over the barycentric subdivision as above, which is again $\Gm$-equivariant. 
\end{proof}

Let $\fM^{res}_{\infty} \subset \fM^{res}$ be the irreducible subvariety consisting of the $\Gm$-limits of general points in $\fM^{res}$ as $t \mapsto \infty$. Then $\phi (\fM^{res}_{\infty}) \subset \fM_{\lambda,\infty}$. By \cite{BB}, $\fM^{res}$ hence $\fM'$ is birational to a vector bundle over $\fM^{res}_{\infty}$. 

\begin{lemma}\label{lem:fix-loci-rational}
$\fM^{res}_{\infty}$ is rational.
\end{lemma}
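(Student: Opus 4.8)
The plan is to reduce the rationality of $\fM^{res}_{\infty}$ to the rationality of $B$, which has already been recorded in the discussion preceding Proposition \ref{prop:attractor}. First I would observe that, by Lemma \ref{lem:zar-log}, the minimal log structure along $\fM'_{\lambda,\infty}$ is defined over the Zariski site, so the toric singularities of $\fM'_{\lambda}$ transverse to the fixed locus are genuine (Zariski-local) toric singularities with no monodromy to obstruct a global resolution. Consequently the $\Gm$-equivariant resolution $\phi\colon \fM^{res} \to \fM'_{\lambda}$ restricts over $\fM'_{\lambda,\infty}$ to a proper birational toric morphism $\fM^{res}_{\infty} \to \fM'_{\lambda,\infty}$ of the attracting fixed loci. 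In particular $\fM^{res}_{\infty}$ is birational to $\fM'_{\lambda,\infty}$.

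Next I would identify $\fM'_{\lambda,\infty}$ with $M = M_{0,n+\sum_i k_i}$ using Lemma \ref{lem:rigidified-fix-locus} (and handle the degenerate case $n+\sum_i k_i\le 3$, where $\fM'_{\lambda,\infty}$ is a single non-stacky point, hence trivially rational, separately at the outset). Under the assumption (\ref{equ:enough-point}), $\fM'_{\lambda,\infty}$ has underlying scheme $M$, which is a rational variety (it is an open subset of $(\PP^1)^{n+\sum k_i}$ after fixing three of the markings, or one may cite its well-known rationality directly). Therefore $\fM^{res}_{\infty}$, being birational to $M$, is rational. Since rationality is a birational invariant, and $\fM^{res}_{\infty}$ is by construction irreducible (it is the closure of the $\Gm$-limits of general points of the irreducible variety $\fM^{res}$), this completes the argument.

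The main subtlety I expect is making precise that the resolution $\phi$ genuinely restricts to a birational map on the fixed loci rather than merely mapping $\fM^{res}_{\infty}$ into $\fM_{\lambda,\infty}$: one must check that $\phi$ is an isomorphism over the dense torus-orbit-stratum of $\fM'_{\lambda,\infty}$ where $\fM'_{\lambda}$ is already smooth, so that the induced map on attracting fixed loci is birational and not, say, a contraction. This follows because $\phi$ is a sequence of log \'etale blow-ups, hence an isomorphism over the locus where $\fM'_{\lambda}$ is log smooth with trivial-to-free transverse log structure, and because the Bialynicki-Birula fixed locus functor behaves well under equivariant proper birational morphisms that are isomorphisms over a dense invariant open set. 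Given that, together with the fact (already used to produce $\phi$ and invoked via \cite{BB}) that $\fM'_{\lambda,\infty}$ meets this good open locus densely, the birationality of $\fM^{res}_{\infty} \to \fM'_{\lambda,\infty}$, and hence the rationality of $\fM^{res}_{\infty}$, follows.
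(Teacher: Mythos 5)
There is a genuine gap: your key step, that $\phi$ restricts to a \emph{birational} map $\fM^{res}_{\infty} \to \fM'_{\lambda,\infty}$, is unjustified and false in general. The fixed locus $\fM'_{\lambda,\infty}$ consists of the maximally degenerate stable log maps, so it lies entirely inside the deepest stratum of the log structure of $\fM'_{\lambda}$, where the characteristic monoid is $\ocM(\Phi)$ of Lemma \ref{lem:min-monoid}. Whenever this monoid is not free, $\fM'_{\lambda}$ is singular along \emph{all} of $\fM'_{\lambda,\infty}$, so there is no dense open subset of the fixed locus over which $\phi$ is an isomorphism, and the argument ``$\phi$ is an isomorphism over the locus where $\fM'_{\lambda}$ is already smooth'' gives nothing. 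Moreover, even granting equivariance, the fixed locus does not ``behave well'' under such a blow-up in the sense you need: already for the blow-up of $\AA^2$ at the origin with a $\Gm$-action of weights $(1,1)$, the limit of a general point lands at a point of the exceptional $\PP^1$ depending on the direction of approach, so the attracting fixed locus upstairs is the whole $\PP^1$ sitting over the single fixed point downstairs. In general the fiber of $\fM^{res}_{\infty}\to\fM'_{\lambda,\infty}$ over a point $y$ is the closure of the torus orbit $O(\sigma)$ of the resolved transverse toric slice, where $\sigma$ is the cone whose relative interior contains the weight vector of $-\lambda$; this is positive--dimensional unless $\sigma$ happens to be maximal, which nothing in the setup guarantees.

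The paper's proof accepts this and shows instead that $\fM^{res}\times_{\fM'_{\lambda}}\fM'_{\lambda,\infty}$, hence the stratum equal to $\fM^{res}_{\infty}$, is a \emph{Zariski-locally trivial fibration in toric varieties} over (an open dense subset of) $\fM'_{\lambda,\infty}\cong M$; this is where Lemma \ref{lem:zar-log} actually enters -- it kills the monodromy on the fan of the fibers via the Artin fan description, so the local trivializations glue Zariski-locally -- rather than serving to make $\phi$ an isomorphism near the fixed locus. Rationality then follows because toric varieties are rational and $M$ is rational (only $M$, not $B$, is needed here). To repair your argument you would need either to prove that the relevant cone $\sigma$ is always maximal (so the fibers are points), which is not claimed and seems unlikely, or to replace the birationality claim with the locally trivial toric fibration statement as in the paper.
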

\begin{proof}
Consider any reduced irreducible stratum $S$ of 
\[
\fM^{res}\times_{\fM'_{\lambda}}\fM'_{\lambda,\infty}
\]
with $\ocM_{S}$ a locally constant sheaf of monoids on $S$. Observe that $S$ is stable under the torus action. In fact, consider the projection
\[
\pi: S \to \fM'_{\lambda,\infty}.
\]
Then for any point $y \in \fM'_{\lambda,\infty}$, the fiber $\pi^{-1}(y)$ is a toric variety with its toric boundary given by the strata defined by the log structure on $S$. The torus action given by $\lambda$ on $\pi^{-1}(y)$ is compatible with the torus action of the toric variety, since the resolution (\ref{equ:resolution}) is locally given by toric blow-ups. Thus $\fM^{res}_{\infty}$ is given by some stratum $S$ as above. 

Consider the projection $\varphi: \fM^{res}_{\infty} \to \fM'_{\lambda,\infty}$. By further restricting to a Zariski open set of $\fM'_{\lambda,\infty}$, we may assume $\varphi$ is smooth. We next verify that $\varphi$ defines a Zariski locally trivial family of toric varieties.

Let $\cA_1$ and $\cA_2$ be the Artin fan of $\fM'_{\lambda}$ and $\fM'_{\lambda,\infty}$ respectively \cite[Section 3.2]{log-bound}. Since Artin fans in the initial factorization \cite[Proposition 3.1.1]{log-bound}, we have the following commutative diagram:
\[
\xymatrix{
\fM'_{\lambda,\infty} \ar[r] \ar[d] & \fM'_{\lambda} \ar[d] \\
\cA_2 \ar[r] & \cA_1.
}
\]
The resolution (\ref{equ:resolution}) is obtained via the subdivision $\cA_1' \to \cA_1$ as in \cite[Section 3.17]{log-bound}. This induces the subdivision $\cA_2':=\cA_1'\times_{\cA_1}\cA_2 \to \cA_2$ by \cite[Proposition 3.16]{log-bound}.  Since the log structure on $\fM'_{\lambda,\infty}$ is Zariski by Lemma \ref{lem:zar-log}, the Artin fan is of the form
\[
\cA_2 = [\spec \CC[N]/\spec \CC[N^{gp}]]
\]
where $N$ is the characteristic monoid over $\fM'_{\lambda,\infty}$. Thus the subdvision $\cA_2'$ is obtained by a sequence of toric blow-ups of $\cA_2$. Since 
\[
\fM^{res}\times_{\fM'_{\lambda}}\fM'_{\lambda,\infty} = \cA'_{2}\times_{\cA_2}\fM'_{\lambda,\infty},
\]
by Lemma \ref{lem:zar-log} we find $\phi$ is a family of toric varieties which admits a Zariski local trivialization. Finally, the statement follows from the rationality of $M$ and Lemma \ref{lem:rigidified-fix-locus}.
\end{proof}

Summing up the above argument, we have

\begin{proposition}\label{prop:quotient-of-rational}
Under Assumption \ref{assu:good-curve} for the curve class $\beta$, the stack $\fM_{\Gamma}(\X)$ is birational to a quotient of the rational variety $\fM'$ by the product of symmetric groups $\fS$ as in (\ref{equ:permuting-marking}).
\end{proposition}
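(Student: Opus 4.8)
\smallskip
\noindent\emph{Proof strategy.} The plan is to chain together the constructions of Section~\ref{sec:rationality} into
\[
\fM_{\Gamma}(\X)\ \sim\ \fM_{\lambda}\ \sim\ [\fM'/\fS],
\]
and then to observe that $\fM'$ is rational. The first birational equivalence comes from showing that the attractor cell $\fM_{\lambda}$ is dense in $\fM_{\Gamma}(\X)$; the second is the identification already recorded in the discussion following (\ref{equ:rigidify-stack}); and the rationality of $\fM'$ is extracted from the Bialynicki--Birula decomposition together with Lemma~\ref{lem:fix-loci-rational}.

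For the first step I would argue as in the proof of Proposition~\ref{prop:irreducibility}. The locus $\fM^{\mathrm{good}}\subset\fM_{\Gamma}(\X)$ of stable log maps satisfying the conditions of Notation~\ref{not:good-map} is open, since each of those conditions is open; it is nonempty and hence, by the irreducibility of $\fM_{\Gamma}(\X)$, dense — nonemptiness follows from the density of the trivial-log-structure locus and from Lemma~\ref{lem:group-move} (translating by a general $g\in G$ a map with irreducible source meeting $G/H$). For any $[f]\in\fM^{\mathrm{good}}$, Proposition~\ref{prop:limit}, combined via Lemma~\ref{lem:curve-in-center} with Assumption~\ref{assu:good-curve}, describes the $t\to\infty$ limit of $[f]$, and Proposition~\ref{prop:attractor} (in the range (\ref{equ:enough-point})), or the direct fact that $[f]$ flows to a single point (when $n+\sum_i k_i\le 3$), shows that this limit lies in $\fM_{\lambda,\infty}$. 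Therefore $\fM^{\mathrm{good}}\subset\fM_{\lambda}$, so $\fM_{\lambda}$ is dense and $\fM_{\Gamma}(\X)\sim\fM_{\lambda}\sim[\fM'/\fS]$.

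It then remains to check that $\fM'$ is rational, which I would do along the lines already sketched before Lemma~\ref{lem:fix-loci-rational}: $\fM'$ is birational to its attractor cell $\fM'_{\lambda}$, which is birational to the $\Gm$-equivariant projective resolution $\fM^{res}$ of (\ref{equ:resolution}); since $\fM^{res}$ is smooth and projective with a $\Gm$-action, \cite{BB} makes it birational to a vector bundle over the fixed component $\fM^{res}_{\infty}$, and a vector bundle over a rational base is rational. As $\fM^{res}_{\infty}$ is rational by Lemma~\ref{lem:fix-loci-rational}, all of $\fM^{res}$, $\fM'_{\lambda}$ and $\fM'$ are rational. Combining the two steps, $\fM_{\Gamma}(\X)$ is birational to $[\fM'/\fS]$ with $\fM'$ rational and $\fS$ the product of symmetric groups (\ref{equ:permuting-marking}), as claimed. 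I expect the only genuinely delicate point to be the density assertion in the first step — that the maps of Notation~\ref{not:good-map} really sweep out the attractor cell and that their limits stay inside $\fM_{\lambda,\infty}$; this is exactly where Assumption~\ref{assu:good-curve} and the uniqueness and existence of the minimal log lift (Proposition~\ref{prop:unique-log-lift}, Lemma~\ref{lem:exist-log-lift}) enter, whereas the resolution, the Bialynicki--Birula decomposition, and the rationality of the fixed locus are imported verbatim from the preceding lemmas.
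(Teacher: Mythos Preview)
Your proposal is correct and follows essentially the same route as the paper: the proposition is stated there as a summary (``Summing up the above argument''), and you have accurately reconstructed that argument --- density of $\fM_{\lambda}$ in $\fM_{\Gamma}(\X)$ via irreducibility and Lemma~\ref{lem:group-move}, the identification $\fM_{\lambda}\sim[\fM'/\fS]$ recorded after~(\ref{equ:rigidify-stack}), and rationality of $\fM'$ via the equivariant resolution, the Bialynicki--Birula cell, and Lemma~\ref{lem:fix-loci-rational}. One small inaccuracy: $\fM^{res}$ is not projective (only the morphism $\phi$ in~(\ref{equ:resolution}) is), so the appeal to \cite{BB} is, as in the paper, to the local structure of the attractor cell near the fixed locus rather than to the global decomposition of a projective variety; this does not affect the birational conclusion.
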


This concludes the proof of unirationality in Theorem \ref{thm:main}. \qed

\begin{corollary}\label{cor:rationality}
Assume that $k_i \leq 1$ for all $i$ in (\ref{equ:curve-class-ad}), and Assumption \ref{assu:good-curve} holds. Then $\fM_{\Gamma}(\X)$ is rational.
\end{corollary}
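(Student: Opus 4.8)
The plan is to read this off immediately from Proposition~\ref{prop:quotient-of-rational}. That result gives, under Assumption~\ref{assu:good-curve}, a birational equivalence between $\fM_{\Gamma}(\X)$ and the stack quotient $[\fM'/\fS]$, where $\fM'$ is a rational variety and $\fS = \fS_{k_1}\times\cdots\times\fS_{k_l}$ is the product of symmetric groups permuting the auxiliary markings $x_{i,1},\dots,x_{i,k_i}$ introduced in~(\ref{equ:rigidify-marking}), acting as in~(\ref{equ:permuting-marking}). The one thing to observe is that the hypothesis $k_i\le 1$ for every $i$ in the decomposition~(\ref{equ:curve-class-ad}) forces each factor $\fS_{k_i}$ to act on at most one marking, hence to be the trivial group; therefore $\fS$ is trivial and the quotient map $\fM'\to[\fM'/\fS]$ is an isomorphism. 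Composing the two birational equivalences, $\fM_{\Gamma}(\X)$ is birational to $\fM'$, which is rational, and we conclude.

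There is essentially no obstacle here beyond unwinding the definition of $\fS$: all of the substantive input---the existence of the dense Bialynicki-Birula cell $\fM_{\lambda}$, its birational description via the rigidified space $\fM'$ of~(\ref{equ:rigidify-stack}), and the rationality of $\fM'$ coming from its vector-bundle structure over the rational fixed locus $\fM^{res}_{\infty}$ (Lemma~\ref{lem:fix-loci-rational})---has already been established in Section~\ref{sec:rationality}. The only mild point worth recording is the degenerate range $n+\sum_i k_i\le 3$: there $\fM_{\Gamma}(\X)$ has a dense cell flowing under $\lambda$ to the single non-stacky fixed point of Lemma~\ref{lem:rigidified-fix-locus}, so it is birational to an affine space and \emph{a fortiori} rational, consistently with the statement.
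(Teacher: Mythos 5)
Your proposal is correct and follows exactly the paper's argument: the hypothesis $k_i\le 1$ makes each factor $\fS_{k_i}$ trivial, so Proposition~\ref{prop:quotient-of-rational} exhibits $\fM_{\Gamma}(\X)$ as birational to the rational variety $\fM'$ itself. The extra remark about the range $n+\sum_i k_i\le 3$ is a harmless addition but not needed beyond what the proposition already supplies.
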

\begin{proof}
Under the assumption of the statement, the group $\fS$ is trivial.
\end{proof}

\bibliographystyle{amsalpha}             % (uses file "plain.bst")
\bibliography{Rationality}

\end{document}